\newcommand{\N}{\mathbb{N}}
\newcommand{\Z}{\mathbb{Z}}
\newcommand{\R}{\mathbb{R}}
\newcommand{\E}{\mathbb{E}}
\newcommand{\ud}{\,\mathrm{d}}
\newcommand{\dd}{\mathsf{d}}
\newcommand{\ZZ}{\mathcal{Z}}
\newcommand{\PP}{\mathbb{P}}
\DeclareMathOperator{\dist}{dist}
\DeclareMathOperator{\Cov}{Cov}
\DeclareMathOperator{\Var}{Var}
\numberwithin{equation}{section}
\newtheorem{theorem}{Theorem}[section]
\newtheorem{observation}[theorem]{Observation}
\newtheorem{lemma}[theorem]{Lemma}
\theoremstyle{remark}
\newtheorem{remark}[theorem]{Remark}
\begin{document}

\title{The maximum of the four-dimensional membrane model}
\author{Florian Schweiger\footnote{Institut für angewandte Mathematik, Universität Bonn, Endenicher Allee 60, 53115 Bonn, Germany, E-Mail: \texttt{schweiger@iam.uni-bonn.de}}}
\date{\today}
\maketitle
\begin{abstract}
We show that the centred maximum of the four-dimensional membrane model on a box of sidelength $N$ converges in distribution. To do so we use a criterion of Ding, Roy and Zeitouni and prove sharp estimates for the Green's function of the discrete Bilaplacian. These estimates are the main contribution of this work and might also be of independent interest. To derive them we use estimates for the approximation quality of finite difference schemes as well as results for the Green's function of the continuous Bilaplacian.
\end{abstract}

\section{Introduction}
\subsection{The membrane model}
A stochastic interface model on a finite subset $A$ of the $\dd$-dimensional lattice $\Z^\dd$ is a probability distribution on height functions $\varphi\colon A\to\R$. The most prominent example of such an interface model is probably the gradient model, also called the discrete Gaussian free field. We define it as the centred Gaussian measure on functions $\{\psi_v\colon v\in A\}$ that are zero outside $A$ given by
\[\PP_A^\nabla(\mathrm{d}\psi)=\frac1{Z_A^\nabla}\exp\left(-\frac12\sum_{v\in \Z^\dd}|\nabla_1\psi_v|^2\right)\prod_{v\in A}\mathrm{d}\psi_v\prod_{v\in\Z^\dd\setminus A}\delta_0(\mathrm{d}\psi_v)
\]
where $\nabla_1\psi_v:=(D^1_i\psi_v)_{i=1}^\dd:=(\psi_{v+e_i}-\psi_v)_{i=1}^\dd$ is the discrete gradient, the vector of discrete forward derivatives. The focus of this work, however, will be on a slightly different model, the so-called membrane model. This is the centred Gaussian measure on functions $\{\psi_v\colon v\in A\}$ that are zero outside $A$ given by
\[\PP_A^\Delta(\mathrm{d}\psi)=\frac1{Z_A^\Delta}\exp\left(-\frac12\sum_{v\in \Z^\dd}|\Delta_1\psi_v|^2\right)\prod_{v\in A}\mathrm{d}\psi_v\prod_{v\in\Z^\dd\setminus A}\delta_0(\mathrm{d}\psi_v)\]
where $\Delta_1 \psi_v:=\sum_{i=1}^\dd\psi_{v+e_i}-2\psi_v+\psi_{v-e_i}$ is the discrete Laplacian.

We caution the reader that there are different normalizations of the gradient and membrane model in the literature. Our definitions are most natural from a PDE point of view. They yield fields that are by a factor of $\frac{1}{\sqrt{2\dd}}$ in the case of the gradient model, or $\frac{1}{2\dd}$ in the case of the membrane model smaller than the fields as defined in \cite{Bolthausen2001} and \cite{Kurt2009}, respectively. In the following, when quoting results from these and other works, we will transform them to our scaling.

We will mostly consider these fields on a box $V_N:=[0,N]^\dd\cap\Z^\dd$ of sidelength $N$. We will denote by $(\psi^\nabla_{N,v})_{v\in V_N}$ and $(\psi^\Delta_{N,v})_{v\in V_N}$ random variables distributed according to $\PP_A^\nabla$ and $\PP_A^\Delta$, respectively, where $A=V_N$. 

A general heuristic is that the $\dd$-dimensional membrane model behaves like the $\frac{\dd}{2}$-dimensional gradient model. In particular, the critical dimension (where covariances decay logarithmically) is $\dd=2$ for the gradient and $\dd=4$ for the membrane model. One interesting question about these models is how their maximums $M^\nabla_N=\max_{v\in V_N}\psi^\nabla_{N,v}$ and $M^\Delta_N=\max_{v\in V_N}\psi^\Delta_{N,v}$ behave as $N$ tends to infinity. The answer depends very much on the dimension. In the supercritical case ($\dd\ge3$ for the gradient model, $\dd\ge5$ for the membrane model) the correlations decay rapidly. Using Stein's method, it was shown in \cite{Chiarini2016b, Chiarini2016} that $M_N^\nabla$ behaves as if the $(\psi^\nabla_{N,v})_{v\in V_N}$ were independent, i.e. that \[\frac{\sqrt{2\dd\log N}}{\sqrt{g^\nabla_\dd}}\left(M_N^\nabla-\sqrt{2dg^\nabla_\dd\log N}+\frac{\sqrt{g^\nabla_\dd}\left(\log(\dd\log N)+\log4\pi\right)}{\sqrt{8\dd\log N}}\right)\] converges in distribution to a Gumbel random variable, where $v_N$ is a lattice point closest to the centre of $[0,N]^\dd$ and $g^\nabla_\dd=\lim_{N\to\infty}\Var(\psi^\nabla_{N,v_N})$; and that the analogous statement holds true for $M^\Delta_N$. In the subcritical cases ($\dd=1$ for the gradient model, $1\le \dd\le3$ for the membrane model) we have that $\frac{M^\nabla_N}{N^{\frac{2-\dd}{2}}}$ and $\frac{M^\Delta_N}{N^{\frac{4-\dd}{2}}}$ converge in distribution, which follows from the fact that the whole rescaled field converges weakly in $C^0$. This is classical for the gradient model, and for the membrane model it was shown for $\dd=1$ in \cite{Caravenna2009} and recently for $2\le \dd\le3$ in \cite{Cipriani2019}.
The most interesting and most subtle case is the critical one ($\dd=2$ for the gradient model, $\dd=4$ for the membrane model). For the gradient model, in a series of papers \cite{Bolthausen2001, Bolthausen2011, Bramson2012, Bramson2016} it was shown that $M^\nabla_N-m^\nabla_N$ converges in distribution to a randomly shifted Gumbel variable, where $m^\nabla_N=\sqrt{\frac{2}{\pi}}\log N-\frac{3}{\sqrt{32\pi}}\log\log N$. Even more is known, in particular convergence of the full extremal process \cite{Biskup2016, Biskup2018}. For the membrane model the picture is less clear. The best previous result \cite{Kurt2009} is that $\frac{M^\Delta_N}{\log N}$ converges to $\frac{1}{\pi}$ in probability. The question whether a centred version of $M^\Delta_N$ converges in distribution was posed for example in \cite{Roy2016,Cipriani2019}. We prove that this is the case.

\begin{theorem}\label{t:convmaxmembrane}
Let $\dd=4$. The random variable \[M^\Delta_N-m^\Delta_N:=M_N^\Delta-\frac{1}{\pi}\log N+\frac{3}{16\pi}\log\log N\] converges in distribution. The limit law is a randomly shifted Gumbel distribution $\mu_\infty$, given by \[\mu_\infty((-\infty,x])=\E e^{-\gamma^*\ZZ e^{-8\pi x} }\ \forall x\] where $\gamma^*$ is a constant and $\ZZ$ is a positive random variable that is the limit in law of \[\ZZ_N = \sqrt{8}\sum_{v\in V_N} (\log N-\pi\psi_{N,v}) e^{-8(\log N-\pi\psi_{N, v})}\,.\]
\end{theorem}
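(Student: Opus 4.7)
My strategy is to apply the general convergence criterion of Ding, Roy and Zeitouni, which gives sufficient conditions on the covariance of a log-correlated Gaussian field on a box for its centred maximum to converge in distribution to a randomly shifted Gumbel. The field $\psi^\Delta_{N,\cdot}$ on $V_N \subset \Z^4$ is centred Gaussian with covariance matrix the Green's function $G_N(v,w)$ of the discrete Bilaplacian $\Delta_1^2$ with zero Dirichlet data outside $V_N$, and in the critical dimension $\dd=4$ this Green's function grows logarithmically in $N$. The criterion reduces the theorem to proving a sharp expansion of the form
\[
G_N(v,w) = \tfrac{1}{8\pi^2}\bigl(\log N - \log^+|v-w|\bigr) + f\bigl(\tfrac{v}{N},\tfrac{w}{N}\bigr) + o(1),
\]
uniformly for $v,w$ in a macroscopic subregion of $V_N$, with a continuous function $f$ vanishing on $\partial (0,1)^4$. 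Once this is established, the DRZ machinery gives convergence of $M^\Delta_N - m^\Delta_N$ to a randomly shifted Gumbel; the identification of $\gamma^*$ and of $\ZZ$ as the limit in law of $\ZZ_N$ then follows from the standard derivative-martingale analysis.

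The bulk of the work is therefore the Green's function expansion, which I would derive by comparison with the continuous Bilaplacian. Let $G^c$ be the Green's function of $\Delta^2$ on the unit cube $(0,1)^4$ with clamped-plate boundary conditions; it admits a splitting $G^c(x,y) = -\tfrac{1}{8\pi^2}\log|x-y| + H(x,y)$ with $H$ smooth away from the corners of the cube. A rescaled version $G^c_N(v,w) := G^c(v/N,w/N)$ is a natural candidate for $G_N$, and it already carries the leading $\tfrac{1}{8\pi^2}\log N$ on the diagonal. The remainder $R_N := G_N - G^c_N$ then satisfies a discrete biharmonic equation on $V_N$ whose source term is the local truncation error of the scheme $\Delta_1^2$ applied to $G^c_N$, and the aim is to show $R_N = o(1)$ pointwise in the bulk.

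The main difficulty is the low regularity of $G^c$: it has a logarithmic singularity on the diagonal and the clamped-plate biharmonic operator exhibits well-documented corner singularities on the cube, so the classical finite-difference consistency bound $\|\Delta_1^2 u - \Delta^2 u\|_\infty \le C N^{-2}\|u\|_{C^4}$ fails to apply directly. I would handle this through a three-piece decomposition. Near the diagonal I would replace $G^c_N$ by a translated full-space fundamental solution $\mathfrak{a}$ of $-\Delta_1^2$ on $\Z^4$, whose large-distance asymptotics $\mathfrak{a}(v) = -\tfrac{1}{8\pi^2}\log|v|+\text{const}+O(|v|^{-2})$ can be extracted by Fourier inversion. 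Near the corners of $V_N$, Mellin-type expansions for biharmonic problems on polyhedral cones give explicit control on the singular derivatives of $H$ and hence a usable consistency estimate despite the lack of uniform $C^4$-regularity. In the complementary bulk, classical finite-difference theory applies verbatim. Converting pointwise bounds on the truncation error into pointwise bounds on $R_N$ uses discrete elliptic regularity for $\Delta_1^2$ on $V_N$, applied together with summation by parts. Patching the three regimes yields the desired expansion of $G_N$, and plugging it into the DRZ framework concludes the proof and computes the limit law.
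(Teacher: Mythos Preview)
Your high-level strategy is the same as the paper's: verify the Ding--Roy--Zeitouni assumptions by comparing the discrete Green's function $G_N$ to the continuous clamped-plate Green's function $G^c$ on the unit cube, and handle the singularity on the diagonal by subtracting a full-space discrete fundamental solution with known Fourier asymptotics. The identification of the limit law also comes for free from the DRZ statement, exactly as you say.

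The genuine gap is in your corner analysis. You write that ``Mellin-type expansions for biharmonic problems on polyhedral cones give explicit control on the singular derivatives of $H$'', and you plan to feed this into classical $C^4$-based finite-difference consistency bounds. For the Bilaplacian on a four-dimensional cube, such expansions are \emph{not} available in the literature; the paper explicitly notes this. What is known (and what the paper actually uses) is far weaker: the solution operator of the clamped-plate problem on $(0,1)^4$ is bounded from $W^{-2+\kappa,2}$ to $W^{2+\kappa,2}$ only for some small, unspecified $\kappa_0>0$, obtained by an abstract stability-of-invertibility argument for analytic operator families. In particular you cannot assert that the regular part $H$ lies in $C^4$ (or even $C^1$) up to the boundary, so the ``classical finite-difference theory applies verbatim'' step fails as written, and the three-region patching you describe does not go through.

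The paper's way around this is worth contrasting with your plan. Instead of seeking higher regularity of $H$, it works entirely at the level of $W^{2+\kappa_0,2}$ and proves a finite-difference approximation theorem adapted to that low regularity: if $u\in W^{s,2}_0$ with $2<s<\tfrac52$ solves $\Delta^2u=f$, and $u_h$ solves the discrete problem with right-hand side $T^{h,3,3,3,3}f$ (a B-spline smoothing of $f$), then $\|u-u_h\|_{W^{2,2}_h}\le C h^{s-2}\|u\|_{W^{s,2}}$, via a Bramble--Hilbert argument. This replaces your ``pointwise truncation error $+$ discrete elliptic regularity'' step. The resulting $W^{2,2}_h$-control is upgraded to a pointwise bound by a discrete borderline Sobolev inequality that costs a factor $\sqrt{\log(2+d(x)/h)}$; this logarithmic loss is absorbed because the $h^{\kappa_0}$ gain beats it, and is the reason the near-diagonal and off-diagonal expansions are proved only for points at distance at least $(\log N)^\theta$ from the boundary rather than $\delta N$. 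Your proposal does not anticipate this loss, and your intended route to pointwise control of $R_N$ (``discrete elliptic regularity for $\Delta_1^2$ on $V_N$'') is itself delicate near the boundary for the same reasons.

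In short: the reduction to DRZ and the use of the full-space lattice Green's function are right, but the corner-regularity input you invoke does not exist in dimension four, and the paper's substitute --- minimal $W^{2+\kappa_0,2}$ regularity, a low-regularity finite-difference estimate with smoothing operators, and a logarithmic Sobolev inequality --- is precisely what makes the argument close.
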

Before we put this result in context and discuss our proof strategy let us point out a generalization.
\begin{remark}
Our approach is not limited to the membrane model. In fact, consider for $l\in \N^+$ the $\nabla^l$-model, given by the probability measure
\begin{align*}
\PP^{(l)}_A(\mathrm{d}\psi)
=\begin{cases}\frac1{Z_A^{(l)}}\exp\left(-\frac12\sum_{v\in \Z^\dd}|\Delta_1^{\frac l2}\psi_v|^2\right)\prod_{v\in A}\mathrm{d}\psi_v\prod_{v\in\Z^\dd\setminus A}\delta_0(\mathrm{d}\psi_v)&l\text{ even}\\\frac1{Z_A^{(l)}}\exp\left(-\frac12\sum_{v\in \Z^\dd}|\nabla_1\Delta_1^{\frac{l-1}{2}}\varphi_v|^2\right)\prod_{v\in A}\mathrm{d}\psi_v\prod_{v\in\Z^\dd\setminus A}\delta_0(\mathrm{d}\psi_v)&l\text{ odd}\end{cases}
\end{align*}
(note that $l=1$ corresponds to the gradient model and $l=2$ to the membrane model) in the critical dimension $\dd=2l$ on the cube $A=[0,N]^\dd\cap\Z^\dd$. Then Theorem \ref{t:convmaxmembrane} generalizes to this setting, and the maximum of the field, appropriately centred, converges in law to a randomly shifted Gumbel distribution. Our proof in the following would only require minor modifications to yield this more general result. However, since the case $l=1$ is covered by \cite{Bramson2016}, while the $\nabla^l$-model for $l>2$ is rarely studied, we choose to focus on the case $l=2$ in the following. This allows us to avoid more complicated notation.
\end{remark}

\subsection{Log-correlated fields}

In recent years there has been great interest in the study of log-correlated Gaussian fields. Very roughly speaking, these are fields where the covariance between the values at two different sites decays logarithmically in their distance. Examples include the two-dimensional gradient and four-dimensional membrane model. It is conjectured that these form a universality class in the sense that many of their features do not depend on the detailed structure of the covariance function (see \cite{Duplantier2014} for a general discussion). One example of such a feature is the behaviour of the maximum of the field, and one expects that convergence in law of the recentred maximum holds true for general log-correlated fields. However, it is a challenging problem to verify this fact for specific examples of log-correlated fields. In recent years convergence in law of the recentred maximum has been proven for the gradient model, as already discussed, and also for various other models. Let us mention branching Brownian motion \cite{Bramson1983}, branching random walks \cite{Aidekon2013}, and also problems from random matrix theory (see \cite{Chhaibi2018} for partial results).

Furthermore there have been efforts to give sufficient criteria for convergence in law of the maximum that cover a wide range of log-correlated fields. In \cite{Madaule2015} this was done for so-called $*$-scale invariant models. Most importantly for us, in \cite{Ding2017} Ding, Roy and Zeitouni gave a set of four assumptions that ensure that the maximum of a field converges in distribution. Let us recall their result, slightly reformulated (we have changed the domain from $[0,N-1]^\dd$ to $[0,N]^\dd$, and replaced $\log_+|a|$ with $\log(1+|a|)$ in \ref{A.0} and \ref{A.1}, but it is straightforward to check that the theorem stated here is equivalent to the theorem as stated in \cite{Ding2017}). We write $d_N(v):=\dist(v,\partial[0,N]^\dd)$ for the distance of $v$ to the boundary of $[0,N]^\dd$ and $d(x):=d_1(x)$.

\begin{theorem}[{\cite[Theorem 1.3 and Theorem 1.4]{Ding2017}}]\label{t:logcorrconvinlaw}

Let $V_N=[0,N]^\dd\cap\Z^\dd$, and let $\varphi_N=\{\varphi_{N,v}\colon v\in V_N\}$ be a centred Gaussian field. Assume that
\begin{enumerate}[label=(A.\arabic*),ref=(A.\arabic*),start=0]
	\item\label{A.0} \textbf{(Logarithmically  bounded fields)} There is a constant $\alpha_0 >0$ such that for all $u,v\in V_N$,
\[\Var \varphi_{N,v}\le \log N+\alpha_0\] and 
\[\E(\varphi_{N,v}-\varphi_{N,u})^2  \le 2 \log(1+|u-v|)-|\Var\varphi_{N, v}-\Var \varphi_{N, u}|+4\alpha_0\,.\]
	\item\label{A.1} \textbf{(Logarithmically correlated fields)} For any $\delta>0$ there is a constant $\alpha^{(\delta)}>0$ such that for all $u, v\in V_N$ with $\min(d_N(u),d_N(v))\ge\delta N$ \[|\Cov(\varphi_{N, v},\varphi_{N, u})-(\log N-\log(1+|u-v|))| \le \alpha^{(\delta)}\,.\]
	\item\label{A.2} \textbf{(Near diagonal behaviour)} There are both a continuous function $f_1\colon (0,1)^\dd \to\R$ and a function $f_2\colon \Z^\dd \times \Z^\dd\to \R$ such that the following holds. For all $L, \varepsilon,\delta > 0$, there exists $N_0= N_0(L,\varepsilon, \delta)$ such that for all $x\in [0,1]^\dd$, $N\geq N_0$  such that $Nx\in\Z^\dd$ and $d(x)\ge\delta$, and for all $u, v \in [0,L]^\dd\cap\Z^\dd $ we have \[|\Cov( \varphi_{N, Nx+v},\varphi_{N, Nx+u})-\log N -f_1(x) -f_2(u,v)|< \varepsilon\,.\]
	\item\label{A.3} \textbf{(Off diagonal behaviour)} There is a continuous function $f_3\colon \mathcal{D}^\dd\to \R$, where $\mathcal {D}^\dd=\{(x,y): x,y\in (0,1)^\dd, x\neq y\}$ such that the following holds. For all $L, \varepsilon,\delta > 0$ there exists $ N_1= N_1(L,\varepsilon, \delta)>0$ such that for all $x, y\in [0,1]^\dd$, $N \ge N_1 $ such that $Nx,Ny\in\Z^\dd$, $\min(d(x),d(y))\ge\delta$ and $|x-y| \ge \frac{1}{L}$ we have
\[|\Cov(\varphi_{N,Nx},\varphi_{N,Ny})-f_3(x,y)|<\varepsilon\,.\]
\end{enumerate}
Let $M_N = \max_{v\in V_N} \varphi_{N, v}$ and 
\[m_N = \sqrt{2\dd} \log N - \frac{3}{2\sqrt{2\dd}} \log \log N\,.\]
Then the sequence $M_N-m_N$ converges in distribution to a randomly shifted Gumbel distribution $\mu_\infty$. The limit distribution is given by \[\mu_\infty((-\infty,x])=\E e^{-\gamma^*\ZZ e^{-\sqrt{2\dd}x} }\ \forall x\] where $\gamma^*$ is a constant and $\ZZ$ is a positive random variable that is the limit in law of \[\mathcal Z_N = \sum_{v\in V_N} (\sqrt{2\dd} \log N - \varphi_{N, v}) e^{-\sqrt{2\dd}(\sqrt{2\dd} \log N - \varphi_{N, v})}\,.\]

\end{theorem}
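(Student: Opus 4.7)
The plan is to establish convergence by reducing the problem to a structured hierarchical model whose extremes are well understood, and then to use the four assumptions to transfer the convergence back. Concretely, I would try to compare $\varphi_N$ with a modified branching random walk (MBRW) of matching covariance scaling, analogous to the program carried out by Bramson--Ding--Zeitouni for the two-dimensional Gaussian free field. The structure of the limit, a randomly shifted Gumbel with a derivative-martingale-type random shift $\ZZ$, strongly suggests the limit identification should proceed through a Poisson cluster decomposition à la Biskup--Louidor.

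First I would prove tightness of $M_N-m_N$. The upper tail bound $\PP(M_N-m_N\ge t)\le C e^{-\sqrt{2\dd}\, t}$ should follow from a first-moment estimate plus a chaining argument that uses only assumption \ref{A.0}: the variance bound gives the leading order, while the bound on $\E(\varphi_{N,v}-\varphi_{N,u})^2$ yields the standard log-correlated continuity estimates and the $\frac32\log\log N$ correction. The matching lower tail is more delicate and is the classical second-moment obstruction for log-correlated fields; here one would truncate by a modified second moment that restricts to paths along a dyadic scale hierarchy staying below a suitable barrier, in the spirit of Bramson's barrier estimates for BBM, again using \ref{A.0} and \ref{A.1} to control the increments across scales.

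Next I would identify subsequential limits. Decompose $V_N$ into mesoscopic boxes of side $N/K$ for large $K$; assumption \ref{A.3} controls the long-range covariances $\Cov(\varphi_{N,Nx},\varphi_{N,Ny})\to f_3(x,y)$, so after conditioning on a coarse-grained field one obtains approximately independent local maxima across boxes. Assumption \ref{A.2} then describes the local covariance structure inside each box in terms of $f_1(x)+f_2(u,v)$, which is precisely what is needed to apply a local extremal-process convergence statement: inside each mesoscopic box the high points form an approximately stationary cluster process, and the intensity of extremes picks up a density factor $e^{-\sqrt{2\dd}\,f_1(x)}$ times a universal constant $\gamma^*$ determined by $f_2$ and the local geometry.

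The main obstacle, and the heart of the argument, is showing convergence of the random intensity $\ZZ_N$ to a non-degenerate limit $\ZZ$ and showing that this limit drives the Laplace transform $\E\exp(-\gamma^*\ZZ e^{-\sqrt{2\dd}\,x})$. For this I would verify that $(\ZZ_N)$ is uniformly integrable on the event that the maximum is not anomalously large (using a truncation at height $\sqrt{2\dd}\log N + O(\sqrt{\log N})$ via the modified second moment argument of the tightness step), and that any two subsequential limits satisfy the same distributional fixed-point equation coming from the scale-invariance encoded in \ref{A.2}. Combined with the Poissonian structure above, this pins down the limit Laplace transform uniquely and gives the stated $\mu_\infty$. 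The delicate point throughout is that assumptions \ref{A.0}--\ref{A.3} do not give covariance identities but only asymptotic approximations, so at each step Gaussian comparison (Slepian/Kahane/Sudakov--Fernique) must be invoked to absorb the $\varepsilon$-errors, which is what forces the two-sided logarithmic covariance control in \ref{A.1} in addition to the variance control of \ref{A.0}.
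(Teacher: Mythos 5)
The paper does not prove this theorem: it is quoted (with minor reformulations of the domain and of $\log_+$) from Ding, Roy and Zeitouni \cite{Ding2017} and used as a black box, so your attempt is really being measured against the proof in \cite{Ding2017} rather than against anything in this paper. Your outline does identify the main ingredients of that proof correctly: Gaussian comparison with a modified branching random walk under assumptions (A.0)--(A.1) to obtain tightness of $M_N-m_N$, a truncated second-moment/barrier argument for the lower tail, mesoscopic decoupling via (A.2)--(A.3), and identification of the limit through the derivative-martingale-type functional $\ZZ_N$ and its Laplace transform. In that sense the strategy is the right one.

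However, as a proof the proposal has a genuine gap of completeness rather than of direction: every step that actually carries the weight is asserted, not executed. In particular, (i) the construction of a comparison field to which Slepian/Sudakov--Fernique can be applied is nontrivial precisely because (A.0)--(A.3) give only one-sided and asymptotic covariance information, and absorbing the resulting $\varepsilon$- and $O(1)$-errors is where most of the work in \cite{Ding2017} lies; (ii) the barrier estimates and the resulting sharp right tail $\PP(M_N\ge m_N+t)\asymp t\,e^{-\sqrt{2\dd}\,t}$ --- with the polynomial prefactor $t$, which you omit and which is exactly what makes the limit a \emph{derivative} martingale rather than a plain one --- require the full Bramson-type truncated moment machinery; and (iii) the uniqueness of the subsequential limits of $\ZZ_N$ and the identification of $\mu_\infty$ rest on an invariance/fixed-point argument that you name but do not formulate. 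For the purposes of this paper the correct and intended move is simply to cite \cite{Ding2017}, which is what the author does; a from-scratch proof along the lines you sketch would amount to reproducing that entire paper.
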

This theorem easily implies Theorem \ref{t:convmaxmembrane} once we show that $\psi^\Delta_N$ (or rather $\sqrt{8}\pi\psi^\Delta_N$) satisfies assumptions \ref{A.0}, \ref{A.1}, \ref{A.2}, \ref{A.3}. In fact we can prove even slightly stronger statements than these.
Let us state the precise results that we will prove. We abbreviate $\lambda=\sqrt{8}\pi$. 
\begin{theorem}\label{t:membranemodifiedassumptions}
The field $\varphi_N:=\lambda\psi^\Delta_N$ in dimension $\dd=4$ satisfies
\begin{enumerate}[label=(A.\arabic*'),ref=(A.\arabic*'),start=0]
	\item\label{A.0'} There is a constant $\alpha_0' >0$ such that for all $u,v\in V_N$,
\[\Var \varphi_{N, v} \le \min\left(\log N+ \alpha_0',\alpha_0'\log (2+d_N(v))\right)\] and 
\[\Var \varphi_{N, v}-\Cov(\varphi_{N, v},\varphi_{N, u})  \le \log(1+ |u-v|) + 2\alpha_0'\,.\]
	\item\label{A.1'} There is a constant $\alpha_0''>0$ such that for all $u, v\in V_N$ \[\left|\Cov( \varphi_{N, v},\varphi_{N, u}) - \log\left(2+\frac{\max(d_N(u),d_N(v))}{1+|u-v|}\right) \right| \le \alpha_0''\,.\]
	\item\label{A.2'} There are a constant $\theta_0>0$, a continuous function $f_1\colon(0,1)^4 \to\R$ and a function $f_2\colon \Z^4 \times \Z^4 	\to \R$ such that the following holds. For all $L, \varepsilon> 0$, $\theta>\theta_0$ there exists $N_0'= N_0'(L,\varepsilon, \theta)$ such that for all $x\in [0,1]^4$, $N\geq N_0'$  such that $Nx\in\Z^4$ and $d(x)\ge \frac{(\log N)^\theta}{N}$, and for all $u, v \in [0,L]^4 \cap\Z^4$ we have \[|\Cov( \varphi_{N, Nx+v},\varphi_{N, Nx+u})-\log N -f_1(x) -f_2(u,v)|< \varepsilon\,.\]
	\item\label{A.3'} There are a constant $\theta_1>0$ and a continuous function $f_3\colon \mathcal{D}^4\to \R$, where $\mathcal {D}^4=\{(x,y): x,y\in (0,1)^4, x\neq y\}$ such that the following holds. For all $L, \varepsilon>0$, $\theta>\theta_1$ there exists $ N_1'= N_1'(L,\varepsilon, \theta)$ such that for all $x, y\in V$, $N \ge N_1' $ such that $Nx,Ny\in\Z^4$, $\min(d(x),d(y))\ge \frac{(\log N)^\theta}{N}$ and $|x-y| \ge \frac{1}{L}$ we have
\[|\Cov( \varphi_{N, Nx},\varphi_{N, Ny})-f_3(x,y)|< \varepsilon\,.\]
\end{enumerate}
\end{theorem}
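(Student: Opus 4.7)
The key reduction is that $\Cov(\varphi_{N,u}, \varphi_{N,v}) = 8\pi^2 G_N(u,v)$, where $G_N$ is the Dirichlet Green's function of the discrete Bilaplacian $\Delta_1^2$ on $V_N$, so all four assumptions translate to sharp two-sided estimates on $G_N$.

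In dimension four the fundamental solution of the continuous $\Delta^2$ is $-\frac{1}{8\pi^2}\log|x|$, and the continuous clamped Green's function $G^c_N$ on $[0,N]^4$ (with $u = \partial_n u = 0$ on the boundary) scales as $G^c_N(x,y) = G^c_1(x/N, y/N)$. Standard elliptic theory for the biharmonic equation gives the decomposition $G^c_1(\xi,\eta) = -\frac{1}{8\pi^2}\log|\xi-\eta| + H(\xi,\eta)$ with $H$ smooth in the interior and $H(\xi,\xi) \sim \frac{1}{8\pi^2}\log d(\xi)$ near $\partial[0,1]^4$. This supplies (up to constants) the functions $f_1(x) = 8\pi^2 H(x,x)$ and $f_3(x,y) = 8\pi^2 H(x,y) - \log|x-y|$ appearing in (A.2')--(A.3').

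To pass from the continuous to the discrete side I would introduce the lattice potential kernel $a \colon \Z^4 \to \R$ solving $\Delta_1^2 a = \delta_0$ with $a(w) = -\frac{1}{8\pi^2}\log|w| + c_0 + O(|w|^{-2})$ as $|w|\to\infty$; this object accounts for $f_2(u,v)$. Writing $G_N(u,v) = a(u-v) + K_N(u,v)$ makes $K_N(\cdot,v)$ discrete biharmonic on $V_N$ with prescribed values on $\Z^4\setminus V_N$, and by the expansion of $a$ these boundary values are close to those of the continuous correction $H(\cdot/N,v/N) + \frac{1}{8\pi^2}\log N + c_0$. Comparing $K_N$ to this continuous analogue is then a finite-difference Dirichlet problem, and Bramble--Hilbert style error estimates, fed with regularity bounds for $H$, should yield uniform convergence of $K_N$ to the continuous object away from $\partial V_N$. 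Combined with the lattice expansion of $a$ this produces the asymptotic $8\pi^2 G_N(u,v) = \log N - \log|u-v| + f_1(x) + o(1)$ needed for (A.1')--(A.3'), while the near-boundary part of (A.0') follows from combining $H(\xi,\xi) \sim \frac{1}{8\pi^2}\log d(\xi)$ with the expansion of $a$.

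The hardest step will be obtaining uniform, quantitative finite-difference error bounds that remain usable near $\partial V_N$. Classical error estimates are driven by pointwise norms of high derivatives of the continuous solution, and these derivatives blow up at the corners, edges and faces of the cube, so interior regularity alone is insufficient. One must prove weighted, scale-dependent regularity estimates for $H$ and couple them to a boundary-adapted finite-difference analysis. The appearance of the polylogarithmic boundary distance $(\log N)^\theta/N$ in (A.2')--(A.3'), in place of the macroscopic $\delta$ of Theorem \ref{t:logcorrconvinlaw}, is presumably the price paid for absorbing these boundary losses, and this is where the technical heart of the paper lies.
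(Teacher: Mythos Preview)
Your architecture matches the paper's: reduce to estimates on $G_N$, subtract the lattice fundamental solution (the paper's $F$ from Lemma~\ref{l:mangad}, your $a$), compare the remaining regular part to its continuous analogue via a Bramble--Hilbert type finite-difference bound, and read off $f_1,f_2,f_3$ exactly as you propose.

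Where your sketch diverges from the paper is in the diagnosis of the ``hardest step''. You anticipate needing weighted, scale-dependent control of high derivatives of the continuous regular part $H$ near the faces and corners of the cube, because classical finite-difference error bounds are driven by $C^k$ norms. The paper avoids this entirely. Its key observation is twofold: first, by a soft perturbation/interpolation argument (Theorem~\ref{t:wellposednessWs2}) the biharmonic Dirichlet problem on the cube is well-posed from $W^{-2+\kappa,2}$ to $W^{2+\kappa,2}_0$ for all sufficiently small $\kappa>0$, so the regular part lies in $W^{2+\kappa_0,2}$ globally with no weights and no high derivatives; second, a Sobolev-based Bramble--Hilbert estimate (Theorem~\ref{t:mss}) gives a $W^{2,2}_h$ error of order $h^{\kappa_0}$ from this low fractional regularity alone. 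The cutoff $\eta^{(r)}$ localising the singularity is a device to make the right-hand side lie in $W^{-2+\kappa_0,2}$ with the correct scaling.

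The remaining ingredient you do not mention is the passage from the $W^{2,2}_h$ bound to a pointwise bound. In the critical dimension $W^{2,2}$ just fails to embed in $L^\infty$, and the paper proves a discrete borderline Sobolev inequality (Lemma~\ref{l:poincaresobolev}) with a $\sqrt{\log(d(x)/h)}$ loss. This logarithm, not boundary blow-up of derivatives of $H$, is the actual source of the $(\log N)^\theta$ restriction in (A.2')--(A.3'): one needs $h^{\kappa_0}/d(y)^{\kappa_0}\cdot\sqrt{|\log h|}\to 0$, which forces $d(y)\gg h|\log h|^{1/(2\kappa_0)}$.
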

It is not hard to check that the assumptions \ref{A.0'}, \ref{A.1'}, \ref{A.2'}, \ref{A.3'} imply \ref{A.0}, \ref{A.1}, \ref{A.2}, \ref{A.3} respectively, so that Theorem \ref{t:convmaxmembrane} is a straightforward corollary of Theorem \ref{t:membranemodifiedassumptions}. We give a few more details in Section \ref{s:proofmainthm}.

The proof of Theorem \ref{t:membranemodifiedassumptions} is the main contribution of this work. In the next section we will describe our approach.

\subsection{Green's function estimates}\label{s:introgreen}

The covariance function of the membrane model is the Green's function $G^\Delta_N$ of the discrete Bilaplacian on the grid $[0,N]^\dd$ with zero boundary data, and the assumptions \ref{A.0'}, \ref{A.1'}, \ref{A.2'}, \ref{A.3'} all correspond to certain estimates for this Green's function. Therefore our goal is to understand this Green's function. We are going to apply tools from PDE theory and numerical analysis, so before proceeding further it is convenient to rescale our domain to a unit box. Let $h=\frac{1}{N}$, let $V_h=[0,1]^4\cap(h\Z)^4$, and let $\psi^\Delta_{h,x}:=\psi^\Delta_{N,\frac xh}$. Let $G^\Delta_N$ and $G^\Delta_h$ be the covariance functions of $\psi^\Delta_N$ and $\psi^\Delta_h$. Then also $G^\Delta_h(x,y)=G^\Delta_N\left(\frac xh,\frac yh\right)$.

Using $G^\Delta_N$ and $G^\Delta_h$, $V_N$ and $V_h$, and $\psi_N$ and $\psi_h$ simultaneously is a slight abuse of notation. It should, however, always be clear from the context which object we are referring to. Let us also remark that from a PDE point of view it would arguably be more natural to choose $h=\frac{1}{N+2}$ and rescale $[0,N]^4$ to $[h,1-h]^4$, as this would give our domain a natural boundary layer of zeros, matching the continuous Dirichlet boundary data. Our choice of rescaling, however, is in line with \cite{Ding2017}.

\begin{observation}\label{o:estimatesGh}
Under the aforementioned rescaling, each statement \ref{A.0'}, \ref{A.1'}, \ref{A.2'}, \ref{A.3'} from Theorem \ref{t:membranemodifiedassumptions} for $\lambda\psi_N^\Delta$ in dimension $\dd=4$ is equivalent to the corresponding following statement for $G^\Delta_h$.
\begin{enumerate}[label=(B.\arabic*'),ref=(B.\arabic*'),start=0]
	\item\label{B.0'} There is a constant $\alpha_0' >0$ such that for all $x,y\in V_h$,
 \[\lambda^2G^\Delta_h(x,x) \le \min\left(-\log h+ \alpha_0',\alpha_0'\log\left(2+\frac{d(x)}{h}\right)\right)\] and
\[\lambda^2\left(G^\Delta_h(x,x)-G^\Delta_h(x,y)\right)\le \log\left(1+\frac{|x-y|}{h}\right)+2\alpha_0'\,.\]
	\item\label{B.1'} There is a constant $\alpha_0'' >0$ such that for all $x,y\in V_h$ \[\left|\lambda^2 G^\Delta_h(x,y) -\log\left(2+\frac{\max(d(x),d(y))}{h+|x-y|}\right) \right| \le \alpha_0''\,.\]
	\item\label{B.2'} There are a constant $\theta_0>0$, a continuous function $f_1\colon(0,1)^4 \to\R$ and a function $f_2\colon \Z^4 \times \Z^4 	\to \R$ such that the following holds. For all $L, \varepsilon> 0$, $\theta>\theta_0$ there exists $N_0'= N_0'(L,\varepsilon, \theta)$ such that for all $h\le\frac1{N_0'}$ with $\frac1h\in\N$, all $x\in V_h$ such that $d(x)\ge h|\log h|^\theta$ and for all $u, v \in [0,L]^4\cap\Z^4 $ we have  \[\left|\lambda^2 G^\Delta_h(x+hu,x+hv)+\log h -f_1(x) -f_2(u,v)\right|< \varepsilon\,.\]
	\item\label{B.3'} There are a constant $\theta_1>0$ and a continuous function $f_3\colon \mathcal{D}^4\to \R$, where $\mathcal {D}^4=\{(x,y): x,y\in (0,1)^4, x\neq y\}$ such that the following holds. For all $L, \varepsilon>0$, $\theta>\theta_1$ there exists $ N_1'= N_1'(L,\varepsilon, \theta)$ such that for all $h\le\frac1{N_1'}$ with $\frac1h\in\N$ and for $x, y\in V_h$ such that $\min(d(x),d(y))\ge h|\log h|^\theta$ and $|x-y| \ge \frac{1}{L}$ we have
\[\left|\lambda^2G^\Delta_h(x,y)-f_3(x,y)\right|<\varepsilon\,.\]
\end{enumerate}
\end{observation}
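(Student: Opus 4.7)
The observation is essentially a change-of-variables identity, so the plan is to make the rescaling explicit and verify each equivalence by direct substitution. The only identities I need are
\[
\Var \varphi_{N,v} = \lambda^2 G_h^\Delta(hv,hv), \qquad \Cov(\varphi_{N,v},\varphi_{N,u}) = \lambda^2 G_h^\Delta(hv,hu),
\]
together with $N = 1/h$, $\log N = -\log h$, $d_N(v) = d(hv)/h$, and $|u-v| = |hu-hv|/h$. Setting $x=hv$, $y=hu$ throughout, I will then read off the left- and right-hand sides of each B-statement from the corresponding A-statement.

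For \ref{A.0'} versus \ref{B.0'}, the substitution turns $\Var \varphi_{N,v}$ into $\lambda^2 G_h^\Delta(x,x)$, turns $\log N+\alpha_0'$ into $-\log h+\alpha_0'$, and turns $\log(2+d_N(v))$ into $\log(2+d(x)/h)$; the Lipschitz-type inequality is translated the same way. For \ref{A.1'} versus \ref{B.1'} the only nontrivial point is the identity
\[
\frac{\max(d_N(u),d_N(v))}{1+|u-v|} = \frac{\max(d(x),d(y))}{h+|x-y|},
\]
obtained by multiplying numerator and denominator by $h$, so the arguments of the logarithms on the two sides coincide.

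For the near- and off-diagonal statements the observation that $Nx\in\Z^4$ combined with $u,v\in\Z^4$ gives $h(Nx+u) = x+hu$, hence $\Cov(\varphi_{N,Nx+u},\varphi_{N,Nx+v}) = \lambda^2 G_h^\Delta(x+hu,x+hv)$ and $\Cov(\varphi_{N,Nx},\varphi_{N,Ny}) = \lambda^2 G_h^\Delta(x,y)$. The constraint $d(x)\ge (\log N)^\theta/N$ is literally the same as $d(x)\ge h|\log h|^\theta$, and the thresholds $N_0', N_1'$ in A and B can be chosen to coincide. The same continuous functions $f_1, f_2, f_3$ and the same constants $\alpha_0', \alpha_0''$ appear on both sides, so no auxiliary adjustment is needed.

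Finally, since $N\leftrightarrow h=1/N$ is a bijection between positive integers and $\{1/n:n\in\N^+\}$, each implication goes through in both directions, proving equivalence. I do not anticipate any real obstacle: the content of the observation is purely notational, and its role is to set up the analytic problem for the remainder of the paper, where \ref{B.0'}--\ref{B.3'} will be established via PDE and finite-difference estimates for $G^\Delta_h$.
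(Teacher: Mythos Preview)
Your proposal is correct and matches the paper's treatment: the paper states this as an observation without proof, since it is exactly the direct change of variables you describe. Your explicit verification of the dictionary $\Var\varphi_{N,v}=\lambda^2 G_h^\Delta(hv,hv)$, $d_N(v)=d(x)/h$, $\log N=-\log h$, etc., is precisely what is needed, and there is nothing more to add.
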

Let us discuss how one might prove Theorem \ref{t:membranemodifiedassumptions}, or rather the statements \ref{B.0'}, \ref{B.1'}, \ref{B.2'}, \ref{B.3'}. We write $\Gamma_h=(h\Z)^4\cap\left([-h,1+h]^4\setminus[0,h]^4\right)$. The function $G^\Delta_h$ is the Green's function associated to the discrete boundary value problem
\begin{alignat}{2}
		\Delta_h^2u_h&=f_h\qquad && \text{in }V_h\nonumber\\
		u_h&=0 \qquad && \text{on }\Gamma_h\nonumber\\
		D^h_\nu u_h&=0\qquad && \text{on }\Gamma_h\label{eq:bilaplace}
\end{alignat}
(where $D^h_\nu u(x)=\frac{u(x+h\nu)-u(x)}{h}$ and $\nu$ is an outward unit normal vector). That is, for $y\in V_h$ the function $G_h(\cdot,y)$ is the unique solution of that equation with right hand side $f_h=\delta_h(y)$, defined as $\delta_{h,y}(x)=\begin{cases}\frac1{h^4}&\text{if }x=y\\0 &\text{otherwise}\end{cases}$.

One previous strategy to prove estimates for $G^\Delta_h$, introduced in \cite{Kurt2009} and used as well in \cite{Cipriani2013}, was to compare $G^\Delta_h$ to $\overline G^\Delta_h$, the Green's function associated to the discrete boundary value problem
\begin{alignat}{2}
		\Delta_h^2u_h&=f_h\qquad && \text{in }V_h\nonumber\\
		u_h&=0 \qquad && \text{on }\Gamma_h'\nonumber\\
		\Delta_h u&=0\qquad && \text{on }\Gamma_h\label{eq:bilaplace_otherbc}
\end{alignat}
where $\Gamma'_h=(h\Z)^4\cap\left([-2h,1+2h]^4\setminus[-h,1+h]^4\right)$. The problem \eqref{eq:bilaplace_otherbc} can be seen as an iterated version of the discrete Poisson problem, and so many of the analytic and probabilistic tools available for the latter also have a version for \eqref{eq:bilaplace_otherbc}. In particular, there are random walk representations for $\overline G^\Delta_h$ that allow to control it well. The strategy in \cite{Kurt2009} then was to use PDE techniques to compare solutions of \eqref{eq:bilaplace} and \eqref{eq:bilaplace_otherbc}. This allows to estimate the difference between $G_h$ and $\overline G_h$ uniformly in compact subsets of $(0,1)^4$. For our purposes, this is not good enough, as for \ref{B.2'} and \ref{B.3'} an error term that is only bounded is already too much. Note however that results similar to \ref{B.0'}, \ref{B.1'} can be proved using these methods. In fact, \cite[Proposition 1.1]{Kurt2009} and \cite[Lemma 2.1]{Cipriani2013} are already weaker versions of \ref{B.0'} and \ref{B.1'}.

In \cite{Muller2019} the authors considered $G^\Delta_h$ in dimensions 2 and 3, and used a very different strategy. They used a compactness argument to transfer estimates for the continuous Green's function in domains with singularities to the discrete setting. This allowed them the prove discrete Caccioppoli inequalities (i.e. $L^2$-based decay estimates on balls of various sizes) and to conclude from these estimates for $G^\Delta_h$. In principle, this strategy can also be applied in our four-dimensional setting. One obstacle to this is that, unlike the two- or three-dimensional case, the relevant continuous estimates cannot be found in the literature. Even more importantly, the estimates in \cite{Muller2019} are all up to a possibly large constant, and so the argument would have to be modified significantly to obtain estimates such as \ref{B.2'} and \ref{B.3'}.

\medskip

Instead of the aforementioned approaches to derive estimates for $G^\Delta_h$ we will use estimates for the approximation quality of finite difference schemes for the Bilaplacian. This idea is not completely new, as for example in \cite{Cipriani2019} estimates for finite difference schemes from \cite{Thomee1964} were used to prove convergence of the rescaled four-dimensional membrane model in some negative Sobolev space. However, we would like to obtain a much stronger conclusion, namely pointwise estimates for the difference of the discrete and continuous Green's function. The result from \cite{Thomee1964} is very general, but because of its generality it requires in our specific case very strong assumptions on the solution of the continuous Bilaplace equation to be approximated (being $C^5$) to yield estimates useful for us (the $W^{2,2}_h$-approximation error decaying like $h^\frac12$).

We will use a rather different estimate for the approximation quality of finite difference schemes. 
We will discuss the details in Section \ref{s:finitediff}. Roughly speaking, the result is the following: Let $2<s<\frac52$, let $u\in W^{s,2}\cap W^{2,2}_0((0,1)^4)$ extended by 0 to $\R^4$, and assume that $\Delta^2u=f$ in $(0,1)^4$, so that $u$ satisfies
\begin{alignat}{2}
		\Delta^2u&=f\qquad && \text{in }(0,1)^4\nonumber\\
		u&=0 \qquad && \text{on }\partial(0,1)^4\nonumber\\
		\partial_\nu u&=0\qquad && \text{on }\partial(0,1)^4\,.\label{eq:bilaplace_cont}
\end{alignat}
Furthermore, let $u_h\colon(h\Z)^4\to\R$ be the solution of
\begin{alignat*}{2}
		\Delta_h^2u_h&=T^{h,3,3,3,3}f\qquad && \text{in }V_h\\
		u_h& =0 \qquad &&\text{on }(h\Z)^4\setminus V_h
\end{alignat*}
where $T^{h,3,3,3,3}$ is a certain regularization operator. Then
\[\|u-u_h\|_{W^{2,2}_h(V_h)}\le Ch^{s-2}\|u\|_{W^{s,2}((0,1)^4)}\]
where $\|\cdot\|_{W^{2,2}_h(V_h)}$ is a discrete Sobolev norm.

This result is inspired by closely related recent results in \cite{Muller2019b}. However, in that work the focus is on obtaining estimates as above for $s$ as large as possible. In the case of interest to us, $s<\frac52$, the result can essentially be shown using the methods from \cite{Gavrilyuk1983,Ivanovich1986,Jovanovic2014}.

We will use this result to compare solutions of \eqref{eq:bilaplace} with solutions of \eqref{eq:bilaplace_cont}. In particular, we will use it when $u$ is the regular part of the continuous Green's function on $[0,1]^4$. 
To do so, we need regularity estimates for solutions of \eqref{eq:bilaplace_cont}. As already mentioned, optimal estimates for higher order elliptic problems on four-dimensional polyhedral domains are not yet in the literature. Instead we will use much weaker estimates (similar to ones in \cite{Mitrea2013, Mayboroda2014}) which are nonetheless sharp enough for our purposes. These estimates will allow us to place the regular part of the Green's function in $W^{2+\kappa_0,2}$ for some small $\kappa_0>0$, and this is good enough to apply the estimate above.

We will also need to have good estimates for the discrete Green's function on the full space $(h\Z)^4$. These were derived in \cite{Mangad1967} using Fourier analysis. Furthermore, Theorem \ref{t:mss} gives us control over the $W^{2,2}_h$-norm of the difference of $u$ and $u_h$, while we are actually interested in the $L^\infty_h$-norm and want it to decay. To achieve this, we will use a discrete Sobolev-inequality that allows us to control the $L^\infty_h$-norm by the $W^{2,2}_h$-norm at the cost of a term logarithmic in $h$. The presence of this term is the reason why we can prove \ref{B.2'} and \ref{B.3'} only up to distance $|\log h|^\theta$ to the boundary. For \ref{B.0'} and \ref{B.1'} we do not need a decaying but only a bounded error term and so we can prove these estimates on the whole domain.

\medskip

We will give the details of the argument that we sketched here in the following sections. In Section \ref{s:prelim} we gather various useful results: The aforementioned result on finite difference schemes from \cite{Muller2019b}, as well as some discrete inequality of Poincar\'e-Sobolev-type. These tools will allow us to compare $G^\Delta_h$ with various other Green's functions: the discrete Green's function of the full space (that we discuss in Section \ref{s:estgreendisc}) and the continuous Green's functions of the box $[0,1]^4$ and of the full space (that we both discuss in Section \ref{s:estgreencont}). After all these preparations we can then turn to the proof of Theorem \ref{t:membranemodifiedassumptions} in Section \ref{s:proofmainthm}. We first prove a crucial lemma, Lemma \ref{l:discandcontGreenareclose} that shows that the regular part of the discrete and continuous Green's functions on the box are uniformly close, and then we use this Lemma and the results of the preceding sections to establish Theorem \ref{t:membranemodifiedassumptions}. Finally we use Theorem \ref{t:logcorrconvinlaw} to conclude Theorem \ref{t:convmaxmembrane} as well.

\subsection{Notation}
Our notation mostly follows that of \cite{Muller2019}, with some minor modifications. From now on we will only consider the membrane and not the gradient model, so there is no risk of confusion when we drop all superscripts $\Delta$.

In the following $C$ denotes a constant that is independent of all other occurring variables, but whose precise value may change from occurrrence to occurrence. By $C_{r,s,t,\ldots}$ we similarly denote a constant depending only on $r,s,t,\ldots$ whose precise value may change from occurrence to occurrence. Occasionally we write $r=s+O(t)$ to express $|r-s|\le Ct$.

We write $\partial_i$ for the partial derivative in direction $e_i$, and $\partial^\alpha=\partial_1^{\alpha_1}\ldots\partial_4^{\alpha_4}$ for a multi-index $\alpha$. We denote by $\nabla,\nabla^2,\Delta,\Delta^2$ the gradient, the Hessian matrix, the Laplacian and the Bilaplacian respectively. In particular the reader should not confuse $\nabla^2$ and $\Delta$. For $\Omega\subset\R^4$ open, $k\in\N$, $p\in[1,\infty]$, $\alpha\in(0,1)$ we use the $L^p$-space $L^p(\Omega)$, the H\"older space $C^{0,\alpha}(\Omega)$ and the Sobolev space $W^{k,2}(\Omega)$; the latter equipped with the norm $\|u\|^2_{W^{k,2}(\Omega)}=\sum_{|\alpha|\le k}\|\partial^\alpha u\|^2_{L^2(\Omega)}$. For $s>0$ not an integer (i.e. $s=k+t$ where $k\in\N$, $0<t<1$) we will also encounter the fractional Sobolev space $W^{s,2}(\Omega)$ with norm $\|u\|^2_{W^{s,2}(\Omega)}=\|u\|^2_{W^{k,2}(\Omega)}+[u]^2_{W^{s,2}(\Omega)}$ and the seminorm $[u]^2_{W^{s,2}(\Omega)}=\sum_{|\alpha|=k}\int_\Omega\int_\Omega\frac{\left|\partial^\alpha u(x)-\partial^\alpha u(y)\right|^2}{|x-y|^{4+2t}}\ud x\ud y$. For any $s<0$ we define $W^{s,2}(\Omega)$ as the dual of $W^{-s,2}_0(\Omega)$. We extend these definitions to vector-valued functions by taking the $l^2$-norm of the norms of the components.

By $B_r(x)$ we denote the open ball of radius $r$ around $x$.

For a unit vector $a\in\R^4$ define the forward difference quotient $D^h_av(x):=\frac1h(v(x+ha)-v(x))$ and the backward difference quotient $D^h_{-a}v(x):=\frac1h(v(x)-v(x-ha))$. When $a$ is a standard unit vector $e_i$, we write $D^h_i$ instead of $D^h_{e_i}$ and $D^h_{-i}$ instead of $D^h_{-e_i}$.

The discrete gradient is the vector $\nabla_hv(x):=(D^h_iv(x))_{i=1}^4$, the discrete Hessian is the tuple $\nabla^2_hv(x):=(D^h_iD^h_{-j}v(x))_{i,j=1}^4$, the discrete Laplacian is $\Delta_hv(x):=\sum_{i=1}^4D^h_iD^h_{-i}v(x)$, and the discrete Bilaplacian is $\Delta_h^2:=\Delta_h\circ\Delta_h$. For a multi-index $\alpha\in\N^4$ we write $D^h_\alpha u_h(x)=(D^h_1)^{\alpha_1}\ldots(D^h_4)^{\alpha_4}u_h(x)$.
Given $A\subset (h\Z)^4$ and $u_h\colon A\to\R$, we define $\|u_h\|_{L^2_h(A)}^2=\sum_{x\in A}h^4|u_h(x)|^2$, and $\|u_h\|_{L^\infty_h(A)}=\sup_{x\in A}|u_h(x)|$. We will also use the discrete Sobolev-norm $\|u_h\|_{W^{2,2}_h(A)}^2=\|u_h\|_{L^2_h(A)}^2+\|\nabla_hu_h\|_{L^2_h(A)}^2+\|\nabla_h^2u_h\|_{L^2_h(A)}^2$, where we extend the definitions to vector-valued functions as before.

For $r>0$ and $x\in(h\Z)^4$ we let $Q^h_r(x)=x+[-r,r]^4\cap(h\Z)^4$ be the cube of diameter $2r$ around $x$. 

Let us also fix once and for all a smooth function $\eta\colon\R^4\to\R$ that is equal to 1 on $B_{\frac12}(0)$ and 0 outside $B_1(0)$. We define $\eta^{(r)}(x)=\eta(rx)$, $\eta_y^{(r)}(x)=\eta^{(r)}(x-y)$ and let $\eta^{(r)}_{h,y}$ be the restriction of $\eta_y^{(r)}$ to $(h\Z)^4$. Thus $\eta^{(r)}_y$ and $\eta^{(r)}_{h,y}$ are cut-off functions at scale $r$ around $y$.

\section{Preliminaries}\label{s:prelim}

\subsection{Discrete Inequalities}\label{s:estpoincsob}
We collect here two discrete inequalities that we will use several times in the following. We begin with a Poincar\'e inequality.

\begin{lemma}\label{l:poincare}
Let $x_*\in (h\Z)^4$, $r\ge0$. Let $u_h\colon(h\Z)^4\to\R$ and suppose that $u_h$ vanishes on at least one of the faces of $Q_r(x_*)$. Let this face be contained in a plane $x_i=c$. Then 
\begin{align}
\|u_h\|_{L^2_h(Q^h_r(x_*))}^2\le Cr^2\sum_{x\colon\{x,x+he_i\}\subset Q^h_r(x_*)}h^4|D^h_iu_h(x)|^2
\le Cr^2\|\nabla_hu_h\|_{L^2_h(Q^h_r(x_*))}^2\,.\label{eq:poincare1}
\end{align}
\end{lemma}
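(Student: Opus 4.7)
The plan is to adapt the classical one-dimensional Poincaré argument (fundamental theorem of calculus plus Cauchy--Schwarz) to the discrete setting, working along lines parallel to $e_i$. Without loss of generality I would assume the vanishing face is the one with smallest $i$-th coordinate in $Q^h_r(x_*)$; the other case is symmetric (one may reverse orientation, which turns forward differences into backward differences whose $L^2_h$-sum is comparable to that of forward differences on the interior edges). For each $x \in Q^h_r(x_*)$ let $y(x)$ be the projection of $x$ onto this face along $e_i$, and write $x = y(x) + k(x) h e_i$ with $0 \le k(x) \le 2r/h + 1$.

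First I would use the discrete telescoping identity
\[ u_h(x) = u_h(x) - u_h(y(x)) = \sum_{j=0}^{k(x)-1} h\, D^h_i u_h\bigl(y(x) + j h e_i\bigr),\]
valid because $u_h(y(x)) = 0$ by assumption. Applying Cauchy--Schwarz to this $k(x)$-term sum yields
\[ |u_h(x)|^2 \le k(x) \cdot h^2 \sum_{j=0}^{k(x)-1} \bigl|D^h_i u_h(y(x) + j h e_i)\bigr|^2 \le Cr h \sum_{j=0}^{k(x)-1} \bigl|D^h_i u_h(y(x) + j h e_i)\bigr|^2,\]
using $k(x)h \le Cr$. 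The key step is then to multiply by $h^4$, sum over $x \in Q^h_r(x_*)$, and exchange the order of summation: for each fixed edge $\{z,z+he_i\}$ inside the cube, the quantity $|D^h_i u_h(z)|^2$ appears in the bound for $|u_h(x)|^2$ precisely for the $x$ lying on the same $e_i$-line as $z$ with $x_i > z_i$, which is at most $Cr/h$ values of $x$. Collecting the factors $Crh \cdot (r/h) = Cr^2$ gives exactly the first inequality of \eqref{eq:poincare1}.

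The second inequality is then immediate: the edge sum is bounded by $\sum_{x \in Q^h_r(x_*)} h^4 |D^h_i u_h(x)|^2$, which in turn is dominated by $\|\nabla_h u_h\|^2_{L^2_h(Q^h_r(x_*))}$ since $\nabla_h u_h$ has $|D^h_i u_h|$ as one of its components. There is no serious obstacle here; the only bookkeeping point requiring care is that edges straddling the boundary of $Q^h_r(x_*)$ must be handled consistently with the hypothesis (restricting the edge sum to $\{z,z+he_i\} \subset Q^h_r(x_*)$, as in the statement, is automatic because the telescoping path from $y(x)$ to $x$ stays inside the cube).
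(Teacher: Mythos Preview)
Your proposal is correct and takes essentially the same approach as the paper: both use the discrete telescoping identity along the $e_i$-direction combined with Cauchy--Schwarz. The paper first establishes the one-dimensional estimate along each $e_i$-line and then sums over the transverse coordinates, whereas you sum over all of $Q^h_r(x_*)$ at once and then swap the order of summation to count how often each edge appears; this is only a cosmetic reorganization of the same argument.
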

\begin{proof}
This is a particular case of \cite[Lemma 2.1]{Muller2019}. For the convenience of the reader we give a proof. The second inequality is obvious, so we only prove the first. By translating and reflecting the lattice and renaming the coordinates, we can assume $i=4$, $Q^h_r(x_*)=[0,2r]^4\cap(h\Z)^4$. We write $x=(x',x_4)$ where $x'\in\R^3$, $x_4\in\R$, $u_h=0$ if $x_4=0$. We will prove the one-dimensional estimate
\begin{equation}\label{eq:poincare2}
\sum_{x_4\in[0,2r]\cap h\Z}|u_h(x',x_4)|^2\le Cr^2\sum_{x_4\in[0,2r-h]\cap h\Z}|D^h_4u_h(x',x_4)|^2\,.
\end{equation}
Once we have established this, \eqref{eq:poincare1} follows by multiplying \eqref{eq:poincare2} by $h^4$ and summing over all $x'\in[0,2r]^3\cap(h\Z)^3$. To prove \eqref{eq:poincare2}, we use $u(x',0)=0$ and write
\begin{align*}
	|u_h(x',x_4)|&=\left|\sum_{y_4\in[0,x_4-h]\cap h\Z}u_h(x',y_4+h)-u_h(x',y_4)\right|\\
	&=\left|\sum_{y_4\in[0,x_4-h]\cap h\Z}hD^h_4u_h(x',y_4)\right|\\
	&\le h\left(\frac{x_4}{h}\right)^{\frac12}\left(\sum_{y_4\in[0,x_4-h]\cap h\Z}|D^h_4u_h(x',y_4)|^2\right)^{\frac12}\\
	&\le \sqrt{2hr}\left(\sum_{y_4\in[0,2r-h]\cap h\Z}|D^h_4u_h(x',y_4)|^2\right)^{\frac12}
\end{align*}
and therefore
\begin{align*}
\sum_{x_4\in[0,2r]\cap h\Z}|u_h(x',x_4)|^2\le \frac{2r}{h}2hr\sum_{y_4\in[0,2r-h]\cap h\Z}|D^h_4u_h(x',y_4)|^2\le4r^2\sum_{y_4\in[0,2r-h]\cap h\Z}|D^h_4u_h(x',y_4)|^2\,.
\end{align*}
This shows \eqref{eq:poincare2}.
\end{proof}

Next we give an inequality of Poincar\'e-Sobolev type. Given $u_h\colon(h\Z)^4\to\R$ that vanishes outside of $V_h$ we would like to estimate its pointwise values by the $\|u_h\|_{W^{2,2}_h((h\Z)^4)}$-norm. We cannot hope for such an estimate to hold with a constant independent of $h$, as the (continuous) Sobolev space $W^{2,2}((0,1)^4)$ does not embed into $L^\infty((0,1)^4)$. However, by Strichartz's  \cite{Strichartz1971} version of the Moser-Trudinger inequality any $u\in W^{2,2}((0,1)^4)$ with $\|u\|_{W^{2,2}((0,1)^4)}=1$ satisfies $\int_{(0,1)^4}e^{c|u(x)|^2}\ud x\le C$, and this suggests that $u$ can diverge at worst like $\sqrt{|\log|x||}$. So back in the discrete setting we can hope for an estimate with a factor scaling like $\sqrt{|\log h|}$. Indeed we have the following result:

\begin{lemma}\label{l:poincaresobolev}
Assume that $u_h\colon(h\Z)^4\to\R$ vanishes outside of $V_h$. Then for any $x\in V_h$ we have
\[|u_h(x)|\le C\sqrt{\log\left(2+\frac{d(x)}{h}\right)}\|u_h\|_{W^{2,2}_h((h\Z)^4)}\,.\]
\end{lemma}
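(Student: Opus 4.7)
The plan is to combine a dyadic decomposition around $x$ with iterated Poincar\'e-type inequalities, paying a factor of $\sqrt{\log}$ rather than $\log$ through a careful Cauchy--Schwarz step.

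Let $r_k = 2^k h$ and let $K$ be the smallest integer with $r_K \ge d(x) + 2h$, so that $Q^h_{r_K}(x)$ contains a full face lying in $(h\Z)^4 \setminus V_h$ (where $u_h \equiv 0$), together with its inward neighbour, and $K \asymp \log_2\bigl(2 + d(x)/h\bigr)$. Writing $\bar u_k$ for the average of $u_h$ on $Q^h_{r_k}(x)$ (and $\bar u_0 := u_h(x)$ for the smallest scale), one telescopes
\[
u_h(x) = \bar u_K + \sum_{k=0}^{K-1}(\bar u_k - \bar u_{k+1}).
\]
The endpoint $\bar u_K$ is handled by iterating Lemma \ref{l:poincare} twice: one application to $u_h$ gives $\|u_h\|_{L^2_h(Q^h_{r_K})} \le C r_K \|\nabla_h u_h\|_{L^2_h(Q^h_{r_K})}$, and since $\nabla_h u_h$ itself vanishes on a face of $Q^h_{r_K}(x)$ (because $u_h$ vanishes on the face and on its inward neighbour), a second application promotes this to $\|u_h\|_{L^2_h(Q^h_{r_K})} \le C r_K^2 \|\nabla_h^2 u_h\|_{L^2_h(Q^h_{r_K})}$. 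Cauchy--Schwarz then yields $|\bar u_K| \le C \|u_h\|_{W^{2,2}_h((h\Z)^4)}$.

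For each telescoping difference I would prove the scale-invariant annular estimate
\[
|\bar u_k - \bar u_{k+1}| \le C \,\|\nabla_h^2 u_h\|_{L^2_h(A_k)}, \qquad A_k := Q^h_{r_{k+1}}(x) \setminus Q^h_{r_k}(x),
\]
by subtracting an affine function $\ell_k$ best fitting $u_h$ on $A_k$ and using that, for any affine $\ell$, both averages over $Q^h_{r_k}(x)$ and $Q^h_{r_{k+1}}(x)$ equal $\ell(x)$ by the symmetry of these cubes about $x$, so $\bar u_k - \bar u_{k+1}$ equals the corresponding difference of averages of $u_h - \ell_k$. A second-order Poincar\'e--Wirtinger inequality on the annulus $A_k$ then bounds $\|u_h - \ell_k\|_{L^2_h(A_k)}$ by $C r_{k+1}^2 \|\nabla_h^2 u_h\|_{L^2_h(A_k)}$. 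Since the annuli $A_0, \dots, A_{K-1}$ are pairwise disjoint, Cauchy--Schwarz in $k$ gives
\[
\sum_{k=0}^{K-1}|\bar u_k - \bar u_{k+1}| \le \sqrt{K}\Bigl(\sum_{k=0}^{K-1}\|\nabla_h^2 u_h\|_{L^2_h(A_k)}^2\Bigr)^{\!1/2} \le \sqrt{K}\,\|u_h\|_{W^{2,2}_h((h\Z)^4)},
\]
and combining with the endpoint bound delivers $|u_h(x)| \le C \sqrt{\log(2 + d(x)/h)}\,\|u_h\|_{W^{2,2}_h}$.

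The main obstacle is the annular (as opposed to full-cube) character of the Poincar\'e--Wirtinger bound in the telescoping estimate: without it, the natural inequality $|\bar u_k - \bar u_{k+1}| \le C \|\nabla_h^2 u_h\|_{L^2_h(Q^h_{r_{k+1}})}$ would propagate the nested-cube overlap and yield only a $\log(2+d(x)/h)$ factor instead of $\sqrt{\log(2+d(x)/h)}$. Carrying out the annular step rigorously in the discrete setting requires carefully handling the affine correction and a version of the second-order Poincar\'e--Wirtinger inequality on an annular region of $(h\Z)^4$. A secondary technicality is the behaviour at the smallest scale: $u_h(x) - \bar u_1$ must be controlled by a direct discrete Taylor-type computation, expressing the $O(h)$ oscillation of $u_h$ near $x$ via the local discrete Hessian in $L^2_h$.
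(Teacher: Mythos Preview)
Your dyadic-telescope strategy is a natural real-space alternative to the paper's Fourier argument, but the key annular step fails as written. You control $\|u_h-\ell_k\|_{L^2_h(A_k)}$ by the Hessian on $A_k$, and then want to bound $\bar u_k-\bar u_{k+1}=\overline{(u_h-\ell_k)}_k-\overline{(u_h-\ell_k)}_{k+1}$. But these are averages over the \emph{full cubes} $Q^h_{r_k}(x)$ and $Q^h_{r_{k+1}}(x)$, not over the annulus; in particular $\overline{(u_h-\ell_k)}_k$ depends only on values of $u_h$ in $Q^h_{r_k}(x)$, which is disjoint from $A_k$. So the annular Poincar\'e--Wirtinger inequality says nothing about it. Concretely, take $u_h$ affine on $A_k$ (so $\|\nabla_h^2 u_h\|_{L^2_h(A_k)}=0$) but with a bump inside $Q^h_{r_k}(x)$; then, with $\ell_k$ equal to that affine function, $\overline{(u_h-\ell_k)}_{k+1}=\frac{|Q^h_{r_k}|}{|Q^h_{r_{k+1}}|}\,\overline{(u_h-\ell_k)}_k$, hence $\bar u_k-\bar u_{k+1}=(1-2^{-4})\,\overline{(u_h-\ell_k)}_k\neq0$. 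Your claimed estimate $|\bar u_k-\bar u_{k+1}|\le C\|\nabla_h^2 u_h\|_{L^2_h(A_k)}$ is therefore false, and without it the argument collapses to the full-cube bound $|\bar u_k-\bar u_{k+1}|\le C\|\nabla_h^2 u_h\|_{L^2_h(Q^h_{r_{k+1}})}$, whose nested supports produce only a $\log$ (not $\sqrt{\log}$) factor after Cauchy--Schwarz.

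A salvage is possible if you telescope averages over the \emph{annuli} themselves: set $\tilde u_k$ to be the mean of $u_h$ over $A_k$ and bound $|\tilde u_k-\tilde u_{k+1}|$ via a second-order Poincar\'e--Wirtinger inequality on the union $A_k\cup A_{k+1}$, which is a connected region of the right scale and gives $|\tilde u_k-\tilde u_{k+1}|\le C\|\nabla_h^2 u_h\|_{L^2_h(A_k\cup A_{k+1})}$; these unions have bounded overlap, so the Cauchy--Schwarz step goes through and yields $\sqrt{K}$. One must then tie $u_h(x)$ to $\tilde u_1$ at the smallest scale and $\tilde u_K$ to $0$ at the largest, both of which are routine. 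The paper, by contrast, avoids all of this by localising with a cut-off at scale $d(x)+h$, proving $\|v_h\|_{L^2_h}\le C(d(x)+h)^2\|\nabla_h^2 v_h\|_{L^2_h}$ via two applications of Lemma~\ref{l:poincare}, and then using the discrete Fourier transform together with the weighted Cauchy--Schwarz split $|\xi|^4+(d(x)+h)^{-4}$; the logarithm appears from $\int_{[-\pi/h,\pi/h]^4}\bigl(|\xi|^4+(d(x)+h)^{-4}\bigr)^{-1}\,d\xi$.
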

This lemma in combination with Theorem \ref{t:mss} will allow us to control the distance between the solution of a continuous Bilaplace equation and its discrete approximation at the cost of a logarithmic divergence (which we will be able to absorb in the applications in Section \ref{s:proofmainthm}).
\begin{proof}[Proof of Lemma \ref{l:poincaresobolev}]
We first want to localize to a ball around $x$. Let $v_h=\eta^{(d(x)+h)}_{h,x}u_h$. Then $v_h(x)=u_h(x)$. Furthermore $v_h$ is supported on $Q^h_{d(x)+h}(x)$. The discrete chain rule implies that
\begin{align*}
	|D^h_iv_h(y)|&\le C\sup_{z\in Q^h_h(y)}\left|D^h_i\eta^{(d(x)+h)}_{h,x}(z)\right|\sup_{z\in Q^h_h(y)}|u_h(z)|\\
	&\quad+C\sup_{z\in Q^h_h(y)}\left|\eta^{(d(x)+h)}_{h,x}(z)\right|\sup_{z\in Q^h_h(y)}|D^h_iu_h(z)|\\
	&\le C\sup_{z\in Q^h_h(y)}\left|D^h_i\eta^{(d(x)+h)}_{h,x}(z)\right|\left(\sum_{z\in Q^h_h(y)}|u_h(z)|^2\right)^{\frac12}\\
	&\quad+C\sup_{z\in Q^h_h(y)}\left|\eta^{(d(x)+h)}_{h,x}(z)\right|\left(\sum_{z\in Q^h_h(y)}|D^h_iu_h(z)|^2\right)^{\frac12}
\end{align*}
and a similar expression for $|D^h_iD^h_{-j}v_h(y)|$. If we sum the squares of these eximates over $y$, we see that
\begin{align}
	\|v_h\|_{W^{2,2}_h((h\Z)^4)}&\le C\|\eta^{(d(x)+h)}_{h,x}\|_{L^\infty_h((h\Z)^4)}\|\nabla_h^2u_h\|_{L^2_h(Q^h_{d(x)+2h}(x))}\nonumber\\
	&\qquad+ C\|\nabla_h\eta^{(d(x)+h)}_{h,x}\|_{L^\infty_h((h\Z)^4)}\|\nabla_hu_h\|_{L^2_h(Q^h_{d(x)+2h}(x))}\nonumber\\
	&\qquad+ C\|\nabla_h^2\eta^{(d(x)+h)}_{h,x}\|_{L^\infty_h((h\Z)^4)}\|u_h\|_{L^2_h(Q^h_{d(x)+2h}(x))}\nonumber\\
	&\quad\le C\|\nabla_h^2u_h\|_{L^2_h(Q^h_{d(x)+2h}(x))}+\frac{C}{d(x)+h}\|\nabla_hu_h\|_{L^2_h(Q^h_{d(x)+2h}(x))}\nonumber\\
	&\qquad+\frac{C}{(d(x)+h)^2}\|u_h\|_{L^2_h(Q^h_{d(x)+2h}(x))}\,.\label{eq:poinsob1}
\end{align}
We can apply Lemma \ref{l:poincare} to $u_h$ and $D^h_iu_h$ for any $i\in\{1,\ldots,4\}$, because these vanish on $Q^h_{d(x)+2h}(x)\setminus[-h,1+h]^4$ and hence in particular on a face of $Q^h_{d(x)+2h}(x)$. Thus we obtain
\begin{align}
\|u_h\|_{L^2_h(Q^h_{d(x)+2h}(x))}&\le C(d(x)+2h)\|\nabla_hu_h\|_{L^2_h(Q^h_{d(x)+2h}(x))}\nonumber\\
&\le C(d(x)+2h)^2\|\nabla_h^2u_h\|_{L^2_h(Q^h_{d(x)+2h}(x))}\,.\label{eq:poinsob2}
\end{align}
If we combine this with \eqref{eq:poinsob1} and note that $d(x)+2h\le2(d(x)+h)$, we obtain
\begin{equation}\label{eq:poinsob6}
\|v_h\|_{W^{2,2}_h((h\Z)^4)}\le C\|u_h\|_{W^{2,2}_h((h\Z)^4)}\,.
\end{equation}
Furthermore, an argument analogous to the one that led to \eqref{eq:poinsob2} shows that
\begin{equation}\label{eq:poinsob3}
	\|v_h\|_{L^2_h((h\Z)^4)}\le C(d(x)+h)^2\|\nabla_h^2v_h\|_{L^2_h((h\Z)^4)}\,.
\end{equation}
Now we are in a position to apply discrete Fourier analysis, similar to the proof of \cite[Proposition B.1]{Kurt2009}. Let
\[\widehat{v_h}(\xi)=h^4\sum_{y\in (h\Z)^4}v_h(y)e^{iy\cdot\xi}\]
for any $\xi\in\left[-\frac{\pi}{h},\frac{\pi}{h}\right]^4$ be the Fourier transform of $v_h$. Then we also have the inverse formula
\[v_h(z)=\frac{1}{(2\pi)^4}\int_{\left[-\frac{\pi}{h},\frac{\pi}{h}\right]^4}\widehat{v_h}(\xi)e^{-iz\cdot\xi}\ud \xi\]
for any $z\in (h\Z)^4$, and Plancherel's formula in the form
\[\int_{\left[-\frac{\pi}{h},\frac{\pi}{h}\right]^4}|\widehat{v_h}(\xi)|^2\ud \xi=(2\pi h)^4\sum_{y\in (h\Z)^4}|v_h(y)|^2=(2\pi)^4\|v_h(y)\|_{L^2_h((h\Z)^4)}^2\,.\]
We have \[\widehat{D^h_\alpha v_h}(\xi)=(e^{-ih\xi_1}-1)^{\alpha_1}\ldots(e^{-ih\xi_4}-1)^{\alpha_4}\widehat{v_h}(\xi)\]
for any $\alpha\in\N^4$. This implies
\[\left|\widehat{D^h_\alpha v_h}(\xi)\right|\ge\frac{1}{C}|\xi_1|^{\alpha_1}\ldots|\xi_4|^{\alpha_4}|\widehat{v_h}(\xi)|\]
for any $\xi\in\left[-\frac{\pi}{h},\frac{\pi}{h}\right]^4$. In combination with Plancherel's formula and \eqref{eq:poinsob3} we conclude
\begin{align}
	\int_{\left[-\frac{\pi}{h},\frac{\pi}{h}\right]^4}|\xi|^4|\widehat{v_h}(\xi)|^2&\le C\|\nabla_h^2v_h\|_{L^2_h((h\Z)^4)}^2\le C\|v_h\|_{W^{2,2}_h((h\Z)^4)}^2\,,\label{eq:poinsob4}\\
	\int_{\left[-\frac{\pi}{h},\frac{\pi}{h}\right]^4}|\widehat{v_h}(\xi)|^2&\le C\|v_h\|_{L^2_h((h\Z)^4)}^2\le C(d(x)+h)^4\|v_h\|_{W^{2,2}_h((h\Z)^4)}^2\,.\label{eq:poinsob5}
\end{align}
Next, we estimate
\begin{align*}
	&|v_h(x)|=\frac{1}{(2\pi)^4}\left|\int_{\left[-\frac{\pi}{h},\frac{\pi}{h}\right]^4}\widehat{v_h}(\xi)e^{-ix\cdot\xi}\ud \xi\right|\\
	&\quad\le C\int_{\left[-\frac{\pi}{h},\frac{\pi}{h}\right]^4}|\widehat{v_h}(\xi)|\ud \xi\\
	&\quad\le C\left(\int_{\left[-\frac{\pi}{h},\frac{\pi}{h}\right]^4}\left(|\xi|^4+\frac{1}{(d(x)+h)^4}\right)|\widehat{v_h}(\xi)|^2\ud \xi\right)^{\frac12}\left(\int_{\left[-\frac{\pi}{h},\frac{\pi}{h}\right]^4}\left(|\xi|^4+\frac{1}{(d(x)+h)^4}\right)^{-1}\ud \xi\right)^{\frac12}\,.
\end{align*}
Using \eqref{eq:poinsob4} and \eqref{eq:poinsob5} we see that
\[\int_{\left[-\frac{\pi}{h},\frac{\pi}{h}\right]^4}\left(|\xi|^4+\frac{1}{(d(x)+h)^4}\right)|\widehat{v_h}(\xi)|^2\ud \xi\le C\|v_h\|_{W^{2,2}_h((h\Z)^4)}^2\,.\]
Furthermore we can compute using polar coordinates that
\begin{align*}
	\int_{\left[-\frac{\pi}{h},\frac{\pi}{h}\right]^4}\left(|\xi|^4+\frac{1}{(d(x)+h)^4}\right)^{-1}\ud \xi&=\int_{\left[-\frac{\pi}{h},\frac{\pi}{h}\right]^4}\frac{(d(x)+h)^4}{1+(d(x)+h)^4|\xi|^4}\ud \xi\\
	&\le C\int_0^{\frac{2\pi}{h}}\frac{(d(x)+h)^4s^3}{1+(d(x)+h)^4s^4}\ud s\\
	&\le C\log\left(1+(d(x)+h)^4\left(\frac{2\pi}{h}\right)^4\right)\\
	&\le C\log\left(2+\frac{d(x)}{h}\right)\,.
\end{align*}
Putting everything together we indeed arrive at
\begin{align*}
|u_h(x)|=|v_h(x)|\le C\sqrt{\log\left(2+\frac{d(x)}{h}\right)}\|v_h\|_{W^{2,2}_h((h\Z)^4)}\le C\sqrt{\log\left(2+\frac{d(x)}{h}\right)}\|u_h\|_{W^{2,2}_h((h\Z)^4)}\,.
\end{align*}
\end{proof}

\subsection{Estimates for finite difference schemes}\label{s:finitediff}

Let us discuss next the estimate for the approximation order of finite difference schemes that was already mentioned in the introduction.

To state it we need some definitions, taken from \cite{Muller2019b}. For $j\ge1$ let $\theta_j$ be the standard univariate centred B-spline of degree $j-1$ (cf. \cite[Section 1.9.4]{Jovanovic2014}). Of interest to us are
\begin{align*}
\theta_3(z):&=\begin{cases}\frac34-z^2&|z|\le\frac12\\
\frac12\left(|z|-\frac32\right)^2&\frac12<|z|\le\frac32\\
0&\text{else}\end{cases}\,,\\
\theta_1(z):&=\begin{cases}1&|z|\le\frac12\\
0&\text{else}\end{cases}\,.
\end{align*}
Using this, we can define the smoothing operator $T^{h,j}_i$ for $1\le i\le 4$ as
\[T^{h,j}_if(x):=\frac1h \int_\R f(x_1,\ldots,x_{i-1},y_i,x_{i+1},\ldots,x_4)\theta_j\left(\frac{x_i-y_i}{h}\right)\ud y_i\]
extended to distributions on $\R^4$ in the obvious way. Furthermore, we set
\[T^{h,j,\ldots, j}f:=T^{h,j}_1\circ\cdots \circ T^{h,j}_4f\,.\]
It is important for us that $T^{h,j}_i$ maps constant functions to themselves and that 
\[T^{h,j}_i\partial_i^2f=D^h_iD^h_{-i}T^{h,j-2}_if\,.\]
If we define the shorthand
\[T^{h,3,3,3,3-2e_i}:=T^{h,3}_1\circ\ldots \circ T^{h,3}_{i-1}\circ T^{h,1}_i\circ T^{h,3}_{i+1}\circ\ldots  \circ T^{h,3}_4\]
we also have
\begin{equation}\label{eq:tandfindiff}
T^{h,3,3,3,3}\partial_i^2f=D^h_iD^h_{-i}T^{h,3,3,3,3-2e_i}f\,.
\end{equation}
\begin{theorem}\label{t:mss}
Let $2<s<\frac52$, let $u\in W^{s,2}_0((0,1)^4)$, extended by 0 to $\tilde u\in W^{s,2}(\R^4)$.
Let $\Delta^2\tilde u=f$ as distributions, so that in particular 
\[\Delta^2u=f \qquad\text{in }(0,1)^4\,.\]
Furthermore, let $u_h\colon(h\Z)^4\to\R$ be the solution of
\begin{alignat*}{2}
		\Delta_h^2u_h&=T^{h,3,3,3,3}f\qquad && \text{in }V_h\\
		u_h& =0 \qquad &&\text{on }(h\Z)^4\setminus V_h\,.
\end{alignat*}
Then we have
\[\|u_h-\tilde u\|_{W^{2,2}_h((h\Z)^4)}\le C_sh^{s-2}\|u\|_{W^{s,2}((0,1)^4)}\,.\]
\end{theorem}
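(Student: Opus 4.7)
The approach is the standard energy/consistency framework for finite-difference schemes: reduce the estimate to bounding a residual functional, and estimate that functional via Fourier-symbol analysis, exploiting the commutation identity \eqref{eq:tandfindiff} and the approximation properties of the B-spline smoothers.

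On the space $X_h$ of grid functions vanishing on $(h\Z)^4\setminus V_h$, introduce the discrete bilinear form $a_h(v,w):=(\Delta_h v,\Delta_h w)_{L^2_h((h\Z)^4)}$. Summation by parts (with no boundary terms, since $X_h$-functions are compactly supported in $V_h$) rewrites $a_h(v,v)$ as a sum of squares of all second-order mixed discrete derivatives of $v$; combined with two successive applications of Lemma \ref{l:poincare} (first to $v$, then to each $D^h_i v$, each of which still vanishes on the distant face of an enclosing cube) this yields coercivity $a_h(v,v)\ge c\|v\|^2_{W^{2,2}_h}$. Since $s>2$ gives $W^{s,2}\hookrightarrow C^0$ in four dimensions and $u\in W^{s,2}_0$ has zero trace, $\tilde u$ vanishes pointwise on $\partial(0,1)^4$; hence $e_h:=u_h-\tilde u|_{(h\Z)^4}$ lies in $X_h$, and a further summation by parts gives $a_h(\tilde u,w_h)=(\Delta_h^2\tilde u,w_h)_{L^2_h}$ for every $w_h\in X_h$. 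Therefore
\[a_h(e_h,w_h)=R(w_h):=(T^{h,3,3,3,3}f-\Delta_h^2\tilde u,w_h)_{L^2_h}\qquad\forall w_h\in X_h,\]
and coercivity together with duality reduce the theorem to the consistency bound
\[|R(w_h)|\le C_s\, h^{s-2}\,\|u\|_{W^{s,2}((0,1)^4)}\,\|w_h\|_{W^{2,2}_h}.\]

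To prove the consistency bound, I would extend $w_h$ by zero and pass to the Fourier side. Setting $s_k(\xi):=\sin(h\xi_k/2)/(h\xi_k/2)$, the Fourier multiplier of $T^{h,3,3,3,3}\Delta^2$ is $\prod_k s_k^3\cdot(\sum_k\xi_k^2)^2$ and that of $\Delta_h^2$ is $(\sum_k\xi_k^2\,s_k^2)^2$, so the symbol $\mathcal R(\xi)$ of the residual operator $T^{h,3,3,3,3}\Delta^2-\Delta_h^2$ vanishes at $\xi=0$. A Taylor expansion yields $\mathcal R(\xi)=O(h^2|\xi|^6)$ as $h|\xi|\to 0$ and $\mathcal R(\xi)=O(|\xi|^4)$ uniformly on $|h\xi_k|\le\pi$. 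Applying Cauchy-Schwarz and Plancherel to pair $\mathcal R\,\overline{\hat{\tilde u}}$ against the $W^{2,2}_h$-symbol of $w_h$, it remains to verify the uniform symbol bound
\[\sup_{\xi\in[-\pi/h,\pi/h]^4}\frac{|\mathcal R(\xi)|}{(1+|\xi|^2)^{s/2}\cdot(\text{symbol of }\nabla_h^2)(\xi)}\le C_s\, h^{s-2}.\]
A case analysis in the regimes $h|\xi|\ll 1$ and $h|\xi|\asymp 1$ establishes this inequality precisely when $2<s<\tfrac52$, following the elementary Taylor/symbol computations of \cite{Gavrilyuk1983,Ivanovich1986,Jovanovic2014}. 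Combined with $\|\tilde u\|_{W^{s,2}(\R^4)}\le C_s\|u\|_{W^{s,2}((0,1)^4)}$ (which holds since $W^{s,2}_0$ is the $W^{s,2}$-closure of $C^\infty_c$ and zero-extension preserves norms of smooth compactly supported functions), this delivers the desired bound on $R(w_h)$.

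\textbf{Main obstacle.} The only non-routine ingredient is the symbol bound in the borderline range $s$ close to $\tfrac52$. The restriction arises because \eqref{eq:tandfindiff} can be applied at most once per coordinate direction: in the diagonal contributions $\partial_k^4$ of $\Delta^2$, a residual factor $\partial_k^2$ survives the smoothings, and bounding this residual against $D^h_k D^h_{-k}$ consumes exactly $s-2<\tfrac12$ units of fractional regularity, beyond which one would need higher-order B-spline smoothers and the more delicate analysis of \cite{Muller2019b}.
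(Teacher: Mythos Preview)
Your overall framework---coercivity of $a_h$ on $X_h$ via summation by parts and Lemma \ref{l:poincare}, reduction to a consistency bound on $R(w_h)$---is exactly what the paper does. The divergence is in how the residual is estimated.

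The paper stays in physical space: using \eqref{eq:tandfindiff} it writes $\Delta_h^2\tilde u-T^{h,3,3,3,3}\Delta^2\tilde u=\sum_i D^h_iD^h_{-i}g_i$ with $g_i=\Delta_h\tilde u-T^{h,3,3,3,3-2e_i}\Delta\tilde u$, integrates by parts once more to reach $\|\nabla_h^2 e_h\|_{L^2_h}\le\sum_i\|g_i\|_{L^2_h}$, and then bounds $g_i(x)$ cell by cell via the Bramble--Hilbert lemma (the functional annihilates quadratic polynomials, hence is controlled by the local $W^{s,2}$-seminorm for any $2<s\le3$).

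Your Fourier-symbol route has a genuine gap. The pairing $R(w_h)$ is a \emph{discrete} $L^2_h$ inner product of $w_h$ against the grid-sampled function $(T^{h,3,3,3,3}\Delta^2-\Delta_h^2)\tilde u\big|_{(h\Z)^4}$. The continuous transform $\hat{\tilde u}$ lives on all of $\R^4$, and sampling produces aliasing: the discrete transform of the residual at $\xi\in[-\pi/h,\pi/h]^4$ is $\sum_{m\in\Z^4}\mathcal R(\xi+2\pi m/h)\,\hat{\tilde u}(\xi+2\pi m/h)$. Your supremum over the fundamental domain says nothing about the $m\neq0$ contributions, and the $\nabla_h^2$-symbol you want to divide by vanishes at the reciprocal-lattice points $\xi_k=2\pi m_k/h$, so the quotient is not even well defined there. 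The references \cite{Gavrilyuk1983,Ivanovich1986,Jovanovic2014} you invoke in fact use the local Bramble--Hilbert argument, precisely because it bypasses aliasing. A pure multiplier proof can be made to work, but it needs an explicit treatment of the high-frequency tails (exploiting the decay of $s_k(\xi)$), which you have not provided.

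Your diagnosis of the constraint $s<\tfrac52$ is also off. A direct check of your own symbol estimates shows the ratio is $O(h^{s-2})$ on $[-\pi/h,\pi/h]^4$ for any $2<s<4$; nothing in the multiplier forces $s<\tfrac52$. In the paper's argument that bound enters only so that the zero extension of $u\in W^{s,2}_0((0,1)^4)$ still lies in $W^{s,2}(\R^4)$ with equal norm; for $s>\tfrac52$ second-order traces obstruct this and one needs the more elaborate extensions of \cite{Muller2019b}.
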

Note that $f=\Delta^2\tilde u\in W^{s-4,2}(\R^4)$ is in a negative Sobolev space. The operator $T^{h,3,3,3,3}$ maps $W^{t,2}(\R^4)$ to $C(\R^4)$ for any $t>-\frac52$ (see \cite[Section 1.9.4]{Jovanovic2014}). So in particular $T^{h,3,3,3,3}f$ has pointwise values and the difference scheme in Theorem \ref{t:mss} makes sense.

This theorem is closely related to \cite[Theorem 1.2]{Muller2019b}. In that theorem one takes $\frac52<s\le3$, and $T^{h,3,3,3,3}$ is replaced by $T^{h,2,2,2,2}$. The novelty of that work lies in choosing a good extension $\tilde u$ and dealing with its boundary values. In our case we can just extend $u$ by 0 and thereby avoid many of these subtleties. In fact, all the ideas for the proof of Theorem \ref{t:mss} are already for example in \cite{Jovanovic2014}.

To make this work more self-contained we give some details for a proof of Theorem \ref{t:mss}, closely following \cite{Muller2019b}.

\begin{proof}[Proof of Theorem \ref{t:mss}]
First of all, $s<\frac52$ and $u\in W^{s,2}_0((0,1)^4)$ imply that $\tilde u$ is actually in $W^{s,2}(\R^4)$ and $\|\tilde u\|_{W^{s,2}(\R^4)}=\|u\|_{W^{s,2}((0,1)^4)}$.

Let $e_h\colon(h\Z)^4\to\R$ be given by $e_h=\tilde u-u_h$. Then,
\begin{alignat*}{2}
		\Delta_h^2e_h&=\Delta_h^2\tilde u-\Delta_h^2u_h=\Delta_h^2\tilde u-T^{h,3,3,3,3}\Delta^2\tilde u \qquad&& \text{on }V_h\\
		e_h&=0 \qquad&& \text{on }(h\Z)^4\setminus V_h
\end{alignat*}
and by summation by parts we have
\begin{align}
\|\nabla_h^2e_h\|^2_{L^2_h((h\Z)^4)}=(e_h,\Delta_h^2e_h)_{L^2_h((h\Z)^4)}=(e_h,\Delta_h^2\tilde u-T^{h,3,3,3,3}\Delta^2\tilde u)_{L^2_h((h\Z)^4)}\,.\label{eq:identitye}
\end{align}
We can rewrite $\Delta_h^2\tilde u-T^{h,3,3,3,3}\Delta^2\tilde u$ using \eqref{eq:tandfindiff} as
\begin{align*}
	 \Delta_h^2\tilde u-T^{h,3,3,3,3}\Delta^2\tilde u&=\sum_{i=1}^4D^h_iD^h_{-i}\Delta_h\tilde u-T^{h,3,3,3,3}\partial_i^2\Delta\tilde{u}\\
	&=\sum_{i=1}^4D^h_iD^h_{-i}\Delta_h\tilde u-D^h_iD^h_{-i}T^{h,3,3,3,3-2e_i} \Delta\tilde{u}\\
	&=\sum_{i=1}^4D^h_iD^h_{-i}g_i
\end{align*}
where
\[g_i:=\Delta_h\tilde u-T^{h,3,3,3,3-2e_i}\Delta\tilde{u}\,.\]
We can insert this into \eqref{eq:identitye} and use summation-by-parts once again to obtain

\begin{align*}
\|\nabla_h^2e_h\|^2_{L^2_h((h\Z)^4)}&=\sum_{i=1}^4(e_h,D^h_iD^h_{-i}g_i)_{L^2_h((h\Z)^4)}\\
&= \sum_{i=1}^4(D^h_iD^h_{-i}e_h,g_i)_{L^2_h((h\Z)^4)}\\
&\le \sum_{i=1}^4\|g_i\|_{L^2_h((h\Z)^4)}\|\nabla_h^2e_h\|_{L^2_h((h\Z)^4)}
\end{align*}
and thus
\begin{equation}
	\|\nabla_h^2e_h\|_{L^2_h((h\Z)^4)}\le\sum_{i=1}^4\|g_i\|_{L^2_h((h\Z)^4)}\,.\label{eq:decomp_E}
\end{equation}

The summands on the right hand side can be bounded using the Bramble--Hilbert lemma (see e.g. \cite[Theorem 2.28]{Jovanovic2014}):
As $s>2$, 
\[|\Delta_h\tilde u(x)|\le C_h\|\tilde u\|_{L^\infty(x+(-3h/2,3h/2)^4)}\le C_{h,s}\|\tilde u\|_{H^s(x+(-3h/2,3h/2)^4)}\,.\]
Because $s>\frac 32$ and $T^{h,3,3,3,3-2e_i}f(x)$ only depends on $f|_{x+(-3h/2,3h/2)^4}$ we can conclude from \cite[Theorem 1.67]{Jovanovic2014} and the locality of $T^{h,3,3,3,3-2e_i}$ that
\[|T^{h,3,3,3,3-2e_i} \Delta\tilde{u}(x)|\le C_{h,s}\|\tilde u\|_{H^s(x+(-3h/2,3h/2)^4)}\,.\]
Thus $g_i(x)$ is a bounded linear functional of $\tilde u\in W^{s,2}(x+(-3h/2,3h/2)^4)$. This functional vanishes when $\tilde u|_{x+(-3h/2,3h/2)^4}$ is a polynomial of degree at most 2. Indeed, if that is the case then $\Delta\tilde u|_{x+(-3h/2,3h/2)^4}$ is a constant function, and $\Delta_h\tilde u(x)$ is equal to the same constant, and the claim follows from the fact that $T^{h,3}_1\ldots T^{h,3}_{i-1}T^{h,1}_i T^{h,3}_{i+1}\ldots  T^{h,3}_4$  maps constant functions to themselves.

We have now shown that $g_i(x)$ is a bounded linear functional of $\tilde u\in W^{s,2}(x+(-3h/2,3h/2)^4)$ that vanishes on polynomials of degree at most 2.
By the Bramble--Hilbert lemma it is bounded by $C_{h,s}[\tilde u]_{W^{s,2}(x+(-3h/2,3h/2)^4)}$ for $s\le3$.
Using a scaling argument to determine the correct prefactor of $h$, we obtain
\[|g_i(x)|\le C_sh^{s-4}[\tilde u]_{W^{s,2}(x+(-3h/2,3h/2)^4)}\]
and hence
\begin{align}
	\|g_i\|_{L^2_h((h\Z)^4)}^2&\le Ch^4\sum_{x\in(h\Z)^4}h^{2(s-4)}[\tilde u]_{W^{s,2}(x+(-3h/2,3h/2)^4)}^2\nonumber\\
	&\le C_sh^{2(s-2)}[\tilde u]_{W^{s,2}(\R^4)}^2 \le C_sh^{2(s-2)}\|u\|_{W^{s,2}((0,1)^4)}^2\label{eq:est_phii}
\end{align}
for those $s$.
Now we can plug \eqref{eq:est_phii} into \eqref{eq:decomp_E} and obtain
\[\|\nabla_h^2e_h\|_{L^2_h((h\Z)^4)}\le C_sh^{s-2}\|u\|_{W^{s,2}((0,1)^4)}\]
for $s<\frac52$. Using the discrete Poincar\'e inequality completes the proof.
\end{proof}

\section{Estimates for other Green's functions}\label{s:estgreen}
\subsection{Estimates for the discrete Green's function of the full space}\label{s:estgreendisc}
Our strategy will be to compare $G_h$ with several other Green's functions, so let us introduce these first.

Recall that $\lambda=\sqrt{8}\pi$. Let $G$ be the Green's function of the continuous Bilaplacian on $[0,1]^4$ with Dirichlet boundary data (i.e. of the problem \eqref{eq:bilaplace_cont}). We also need Green's functions on the full space. Let $\hat G(x,y):=-\frac{1}{\lambda^2}\log|x-y|$. It is easy to check that this is a fundamental solution of the Bilaplacian (i.e. that $\Delta^2\left(-\frac{1}{\lambda^2}\log|\cdot-y|\right)=\delta_y$ in the sense of distributions). We also define $ \hat G_h\colon(h\Z)^4\times(h\Z)^4\to\R$ by $\hat G_h(x,y)=F\left(\frac{x-y}{h}\right)-\frac{1}{\lambda^2}\log h$ where $F$ is the function introduced in the following lemma. We added the summand $-\frac{1}{\lambda^2}\log h$ here to ensure that $\hat G_h$ has the same asymptotic behaviour as $\hat G$. We also define shifted versions of $\hat G_h$ and $\hat G$, namely for $r>0$ we let $\hat G^{(r)}=\hat G+\frac{\log r}{\lambda^2}$, and $\hat G_h^{(r)}=\hat G_h+\frac{\log r}{\lambda^2}$. We occasionally write $G_y$ for $G(\cdot,y)$, and define $G_{h,y}$, $\hat G_y$, $\hat G_{h,y}$, $\hat G_y^{(r)}$ and $\hat G_{h,y}^{(r)}$ analogously.

\begin{lemma}[{\cite[pp. 96-97]{Mangad1967}}]\label{l:mangad}
There is a function $F\colon\Z^4\to\R$ such that $\Delta_1^2F(x)=\begin{cases}1&x=0\\0&\text{else} \end{cases}$, satisfying the asymptotics
\[F(x)=-\frac{1}{8\pi^2}\log|x|+\frac{1}{24\pi^2}\frac{x_1^4+x_2^4+x_3^4+x_4^4}{|x|^6}+O\left(\frac{1}{|x|^4}\right)\]
for $x\ne0$.
\end{lemma}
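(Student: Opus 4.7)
The plan is to use discrete Fourier analysis on the torus $[-\pi,\pi]^4$. The symbol of $-\Delta_1$ is
\[p(\xi) = 4\sum_{i=1}^4 \sin^2(\xi_i/2),\]
so $\Delta_1^2$ corresponds to the Fourier multiplier $p(\xi)^2$. A fundamental solution is formally the inverse Fourier transform of $1/p(\xi)^2$; since $p(\xi)^2 \sim |\xi|^4$ near the origin and $\dd = 4$, this integral diverges logarithmically at $\xi = 0$. I would therefore define
\[F(x) := \frac{1}{(2\pi)^4}\int_{[-\pi,\pi]^4} \frac{e^{-ix\cdot\xi} - \chi(\xi)}{p(\xi)^2}\ud \xi,\]
with $\chi$ a smooth cutoff equal to $1$ in a neighbourhood of $\xi = 0$ and supported in a small ball. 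The subtraction only shifts $F$ by an $x$-independent constant, so applying $\Delta_1^2$ and using the inversion formula yields $\Delta_1^2 F = \delta_0$ on all of $\Z^4$.

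To extract the asymptotics of $F(x)$ as $|x| \to \infty$, I would Taylor-expand the symbol at the origin. From $4\sin^2(\xi_i/2) = \xi_i^2 - \xi_i^4/12 + O(\xi_i^6)$ one obtains
\[\frac{1}{p(\xi)^2} = \frac{1}{|\xi|^4} + \frac{1}{6|\xi|^6}\sum_{i=1}^4 \xi_i^4 + R(\xi),\]
where $R$ is bounded near $0$ and smooth on $[-\pi,\pi]^4 \setminus \{0\}$. This splits $F$ into three contributions. The first, the (regularized) inverse Fourier transform on $\R^4$ of $1/|\xi|^4$, is a classical homogeneous distribution whose inverse transform is $-\frac{1}{8\pi^2}\log|x|$ up to an additive constant; the prefactor can be verified directly from $\Delta^2 \log|\cdot| = -8\pi^2 \delta_0$ in $\R^4$. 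The second contribution, the inverse Fourier transform of $\frac{1}{6|\xi|^6}\sum_i \xi_i^4$, is homogeneous of degree $-2$ in $x$ by the Fourier scaling rule, and an explicit computation (for instance by decomposing the symbol into spherical harmonics of degree $0$ and $4$, or equivalently by solving $\Delta^2 u = \sum_i \partial_i^4 \delta_0$ via convolution with the continuous fundamental solution) identifies it with $\frac{1}{24\pi^2}\sum_i x_i^4/|x|^6$. The third contribution, $\int R(\xi) e^{-ix\cdot\xi}\ud \xi$, is $O(|x|^{-4})$ by four successive integrations by parts in $\xi$, using smoothness of $R$ away from $0$ and periodicity on $[-\pi,\pi]^4$, so that no boundary terms appear.

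The main technical obstacle will be pinning down the sharp constants $-1/(8\pi^2)$ and $1/(24\pi^2)$, which requires careful evaluation of the Fourier transforms of homogeneous distributions on $\R^4$ together with the chosen regularization prescription. An alternative route that sidesteps some of this bookkeeping is a random-walk representation: write $F$ as a suitably renormalized expected occupation measure of the simple random walk on $\Z^4$, iterated twice, and obtain the asymptotics from the local central limit theorem, which directly produces both the leading $\log$ and the first anisotropic correction with the correct prefactors. Since the lemma is classical and due to Mangad, a fully written-out proof would just carry out one of these two computations, and we quote the result from \cite{Mangad1967}.
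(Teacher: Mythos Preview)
The paper does not give a proof of this lemma at all; it simply cites \cite{Mangad1967} and remarks that $F$ is defined via the discrete Fourier multiplier of $\Delta_1^2$, with the asymptotics obtained ``by expanding that multiplier into a Laurent series and computing the Fourier transform termwise.'' Your sketch follows exactly this route, and your final sentence likewise defers to Mangad, so at the level of strategy there is nothing to compare: your proposal and the paper's one-paragraph description are the same approach.

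One technical point in your sketch deserves attention. After subtracting the two leading homogeneous pieces $|\xi|^{-4}$ and $\tfrac{1}{6}|\xi|^{-6}\sum_i\xi_i^4$, the remainder $R(\xi)$ is bounded near $0$ but is still only a homogeneous-of-degree-$0$ function to leading order there. Its fourth derivatives then behave like $|\xi|^{-4}$, which is \emph{not} integrable on a neighbourhood of the origin in $\R^4$, so ``four successive integrations by parts'' do not directly yield $O(|x|^{-4})$; as written you would only get $O(|x|^{-3})$. The fix is exactly what the paper's phrase ``Laurent series'' suggests: extract one more homogeneous term (the degree-$0$ piece, whose inverse Fourier transform is homogeneous of degree $-4$ and hence already $O(|x|^{-4})$), after which the new remainder is $O(|\xi|^2)$ near the origin and its fourth derivatives are $O(|\xi|^{-2})$, which is integrable in four dimensions. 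With that extra subtraction your integration-by-parts argument goes through and the $O(|x|^{-4})$ error is justified.
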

In \cite{Mangad1967}, $F$ is defined using the discrete Fourier multiplier associated to $\Delta_1^2$. By expanding that multiplier into a Laurent series and computing the Fourier transform termwise it is possible to give asymptotic expansions to arbitrary high order. This technique also applies to other discrete polyharmonic Green's functions. For our purposes the first two terms quoted above are sufficient.

Lemma \ref{l:mangad} immediately gives us an asymptotic expansion of $\hat G_h$, and so we can easily obtain estimates for $\hat G_h$ and $\hat G_h^{(r)}$.
\begin{lemma}\label{l:estgreenfull}
Let $h>0$, and $r\ge 192h$. Let $\alpha\in\N^4$ with $|\alpha|\le2$. Then for any $x,y\in(h\Z)^4$ with $\frac{r}{64}\le|x-y|_\infty\le 16r$ we have
\begin{align}
	\left|\hat G_h^{(r)}(x,y)-\frac{1}{\lambda^2}\log\left(\frac{r}{|x-y|+h}\right)\right|&\le C\label{eq:estgreenfull3}\,,\\
	\left|D^h_\alpha \hat G_{h,y}^{(r)}(x)\right|&\le \frac{C}{r^{|\alpha|}}\label{eq:estgreenfull1}\,,\\
	\left|D^h_\alpha \hat G_{h,y}^{(r)}(x)-\partial^\alpha \hat G_y^{(r)}(x)\right|&\le C\frac{h}{r^{|\alpha|+1}}\,.\label{eq:estgreenfull2}
\end{align} 
\end{lemma}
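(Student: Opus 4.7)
I would prove the lemma by applying Mangad's asymptotic expansion from Lemma \ref{l:mangad} at each relevant lattice point and organizing the resulting errors. The hypothesis $r \ge 192h$ together with $|x-y|_\infty \ge r/64$ guarantees $|(x-y)/h|_\infty \ge 3$, so the expansion is valid; it also gives $|x-y|\sim r$, so $\partial^\alpha \hat G_y^{(r)}(x) = O(r^{-|\alpha|})$ and the logarithms $\log(r/|x-y|)$ and $\log(r/(|x-y|+h))$ agree up to an additive $O(1)$ term.

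For \eqref{eq:estgreenfull3}, Mangad's expansion directly gives $F((x-y)/h) = -\log(|x-y|/h)/\lambda^2 + O(h^2/|x-y|^2)$. Adding the shifts $-\log h/\lambda^2+\log r/\lambda^2$ produces $\hat G_h^{(r)}(x,y) = \log(r/|x-y|)/\lambda^2 + O(1)$ since the remainder is bounded in our regime, and replacing $|x-y|$ with $|x-y|+h$ inside the logarithm costs only $\log(1+h/|x-y|) = O(1)$.

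The heart of the argument is \eqref{eq:estgreenfull2}. I would split $F = G_* + P + S$, where $G_*(z) = -\log|z|/\lambda^2$ is the continuous fundamental solution, $P$ is the explicit rational correction from Lemma \ref{l:mangad}, and $|S(z)| \le C/|z|^4$ is the unspecified remainder. Both $G_*$ and $P$ are smooth outside the origin with $|\partial^k G_*(z)|\le C|z|^{-k}$ and $|\partial^k P(z)| \le C|z|^{-2-k}$, so Taylor's theorem at unit spacing yields $|D^1_\alpha G_*(z)-\partial^\alpha G_*(z)|\le C|z|^{-|\alpha|-1}$ and $|D^1_\alpha P(z)|\le C|z|^{-|\alpha|-2}$. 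For $S$, where only pointwise bounds are available, I would crudely bound $|D^1_\alpha S(z)|\le C|z|^{-4}$ by sup-bounds; for $|\alpha|\le 2$ this is no worse than $C|z|^{-|\alpha|-2}$. Converting to the $x$-variable via the chain rule (multiplying by $h^{-|\alpha|}$) and using $\partial^\alpha G_*(x-y) = \partial^\alpha \hat G_y^{(r)}(x)$ for $|\alpha|\ge 1$ (the $\log h$ shift is a constant), I obtain
\[
\bigl|D^h_\alpha \hat G_{h,y}^{(r)}(x) - \partial^\alpha \hat G_y^{(r)}(x)\bigr| \le Ch/|x-y|^{|\alpha|+1} + Ch^2/|x-y|^{|\alpha|+2} \le Ch/r^{|\alpha|+1},
\]
where the last inequality uses $|x-y|\sim r$ and $h\le r/192$ to absorb the second term into the first.

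Estimate \eqref{eq:estgreenfull1} then follows at once from \eqref{eq:estgreenfull2} combined with the continuous bound $|\partial^\alpha \hat G_y^{(r)}(x)|\le C/r^{|\alpha|}$ (and \eqref{eq:estgreenfull3} handles the $|\alpha|=0$ case separately), since the error $Ch/r^{|\alpha|+1}$ is dominated by $C/r^{|\alpha|}$. The main obstacle in this plan is that Lemma \ref{l:mangad} only gives pointwise asymptotics of $F$ and not derivative-by-derivative ones; this is why the remainder must be further split into the known explicit term $P$ and the unspecified $S$, with discrete derivatives of the latter estimated by crude sup-bounds. The splitting is tight: the sup-bound argument works precisely because $|\alpha|\le 2$, and would fail for higher $|\alpha|$ without extending Mangad's expansion.
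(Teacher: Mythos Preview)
Your proposal is correct and follows essentially the same route as the paper. Both arguments split $F$ into the continuous logarithm, the explicit rational correction from Lemma~\ref{l:mangad}, and the $O(|z|^{-4})$ remainder, then use Taylor's theorem on the explicit smooth pieces and crude sup-bounds on the remainder; the only cosmetic difference is that you carry out the difference-quotient estimates on the unit lattice in the variable $z=(x-y)/h$ and rescale at the end, whereas the paper writes the expansion directly in the $x$-variable (its display \eqref{eq:estgreenfull4}) and differentiates there.
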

\begin{proof}
By translation invariance we may assume $y=0$. The definition of $\hat G_h^{(r)}$ implies that
\begin{align}
\hat G_h^{(r)}(x,0)&=F\left(\frac{x}{h}\right)-\frac{1}{\lambda^2}\log h+\frac{1}{\lambda^2}\log r\nonumber\\
&=-\frac{1}{\lambda^2}\log\frac{|x|}{h} +\frac{h^2}{24\pi^2}\frac{x_1^4+x_2^4+x_3^4+x_4^4}{|x|^6}+O\left(\frac{h^4}{|x|^4}\right)-\frac{1}{\lambda^2}\log h+\frac{1}{\lambda^2}\log r\nonumber\\
&=\frac{1}{\lambda^2}\log\frac{r}{|x|}+\frac{h^2}{24\pi^2}\frac{x_1^4+x_2^4+x_3^4+x_4^4}{|x|^6}+O\left(\frac{h^4}{|x|^4}\right)\,. \label{eq:estgreenfull4} 
\end{align}
From this we immediately conclude \eqref{eq:estgreenfull3} in the case $x\neq0$. In case $x=0$ we can directly use
\[\hat G_h^{(r)}(0,0)=F(0)+\frac{1}{\lambda^2}\log \frac{r}{h}\]
to obtain \eqref{eq:estgreenfull3}.

The explicit formula for $\hat G$ reveals that
\[\left|\partial^\alpha \hat G_0^{(r)}(x)\right|=\left|\partial^\alpha\frac{1}{\lambda^2}\log\frac{r}{|x|}\right|\le\frac{C}{r^{|\alpha|}} \]
if $\frac r{64}\le|x|_\infty$, and thus \eqref{eq:estgreenfull1} easily follows from \eqref{eq:estgreenfull2}.

For \eqref{eq:estgreenfull2} we want to take discrete derivatives of each summand in \eqref{eq:estgreenfull4} separately. If $g=O\left(\frac{h^4}{|\cdot|^4}\right)$ then $|D^h_\alpha g(x)|\le\frac{C}{h^{|\alpha|}}\frac{h^4}{|x|^4}=C\frac{h^{4-|\alpha|}}{|x|^4}$
so for $|\alpha|\le2$ we can neglect the error term. Using Taylor's theorem we can see that
\[D^h_\alpha\left(\frac{1}{\lambda^2}\log\frac{r}{|x|}+\frac{h^2}{24\pi^2}\frac{x_1^4+x_2^4+x_3^4+x_4^4}{|x|^6}\right)=\partial^\alpha\frac{1}{\lambda^2}\log\frac{r}{|x|}+O\left(\frac{h}{|x|^{|\alpha|+1}}\right)\,.\]
Note that we can avoid the singularity here because $|x|\ge\frac r{64}\ge3h$. This easily implies \eqref{eq:estgreenfull2}.
\end{proof}

\subsection{Estimates for continuous Green's functions}\label{s:estgreencont}
We want to compare $G$ and $G_h$. This is only useful if we also have estimates for $G$ to begin with. We will derive such estimates in this section. The following estimates are far from optimal, but sufficient for our purposes.

We obviously have a well-posedness result for the Bilaplace equation in the energy space $W^{2,2}$. The following result states that the same holds true if we raise the regularity slightly.
\begin{theorem}\label{t:wellposednessWs2}
There exists $\kappa_0>0$ with the following property: Let $0\le\kappa\le\kappa_0$. Then for each $f\in W^{-2+\kappa,2}((0,1)^4)$ there is a unique $u\in W^{2+\kappa,2}\cap W^{2,2}_0((0,1)^4)$ such that $\Delta^2u=f$ in the sense of distributions, and we have the estimate
\begin{equation}\label{e:wellposednessWs2_e1}
	\|u\|_{W^{2+\kappa,2}((0,1)^4)}\le C_\kappa\|f\|_{W^{-2+\kappa,2}((0,1)^4)}
\end{equation}
for a constant $C_\kappa$ depending only on $\kappa$.
\end{theorem}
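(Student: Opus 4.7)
The case $\kappa=0$ is classical: on $W^{2,2}_0((0,1)^4)$ the bilinear form $(u,v)\mapsto\int\Delta u\,\Delta v$ is coercive (since $\int|\Delta u|^2=\int|\nabla^2u|^2$ for $u\in W^{2,2}_0$, combined with the Poincaré inequality applied twice), so Lax--Milgram yields the isomorphism $\Delta^2\colon W^{2,2}_0\to W^{-2,2}$ together with the bound \eqref{e:wellposednessWs2_e1} for $\kappa=0$. Uniqueness in Theorem \ref{t:wellposednessWs2} for $\kappa>0$ is then automatic, because $W^{2+\kappa,2}\cap W^{2,2}_0\subset W^{2,2}_0$ and the difference of two solutions solves the homogeneous problem.

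For existence and the estimate with $\kappa>0$, the plan is to invoke the theory of the biharmonic Dirichlet problem on nonsmooth domains developed in \cite{Mitrea2013,Mayboroda2014}. For a bounded Lipschitz domain $\Omega\subset\R^n$ those works produce a threshold $\kappa_0>0$, depending only on the local geometry of $\partial\Omega$, such that $\Delta^2\colon W^{2+\kappa,2}\cap W^{2,2}_0(\Omega)\to W^{-2+\kappa,2}(\Omega)$ is an isomorphism for all $|\kappa|<\kappa_0$. Applied to the convex polyhedral domain $(0,1)^4$ this gives both the existence statement and the estimate \eqref{e:wellposednessWs2_e1} at once. A more self-contained alternative, in the spirit of the rest of the paper, would combine three ingredients: interior $W^{2+\kappa,2}$ regularity away from $\partial(0,1)^4$; a reflection argument across each open face of the cube, reducing the problem near that face to an interior estimate; and Kondratiev-type weighted $L^2$-estimates on dyadic neighborhoods of the edges and vertices. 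One would then interpolate between $\kappa=0$ and a small positive $\kappa$ to obtain the desired range.

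The main obstacle in either approach is the singular set of $\partial(0,1)^4$, that is, the edges and corners. The essential geometric input is that every dihedral angle of the cube equals $\pi/2$, so that $(0,1)^4$ is convex; this forces the smallest singular exponent of the biharmonic operator at each edge and vertex to strictly exceed the critical value associated to the energy space, leaving a positive amount of extra regularity for solutions with $W^{2,2}_0$ boundary data. Because the proposition only asks for the existence of \emph{some} $\kappa_0>0$, we do not need to identify its sharp value and can rely on the qualitative output of the cited literature; this is consistent with the remark preceding the theorem that the estimates obtained are far from optimal but sufficient for the later applications.
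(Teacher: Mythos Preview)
Your proposal is correct, and the $\kappa=0$ case matches the paper's argument essentially verbatim. For $\kappa>0$ you take a different route: you invoke the full biharmonic regularity theory on Lipschitz/polyhedral domains from \cite{Mitrea2013,Mayboroda2014} as a black box (or, alternatively, sketch a hands-on reflection/Kondratiev argument exploiting the right-angle geometry of the cube). The paper instead uses a purely abstract perturbation argument: the spaces $W^{s,2}$ and $W^{s,2}_0$ form complex interpolation scales, and by a stability result of Tabacco--Vignati \cite{Tabacco1988} the set of $s$ for which $\Delta^2\colon W^{s,2}_0\to W^{s-4,2}$ is an isomorphism is open; since $s=2$ lies in it by the $\kappa=0$ case, some interval $(2-\kappa_0,2+\kappa_0)$ does as well. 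This buys the paper a two-line proof that needs no geometric input about the singularities of $\partial(0,1)^4$ and no sharp threshold from the nonsmooth-domain literature. Your approach, by contrast, would in principle also identify the actual size of $\kappa_0$, but at the cost of either importing heavy machinery or carrying out the edge/vertex analysis you outline. The paper in fact acknowledges your route explicitly, noting that the theorem is a special case of \cite[Theorem 6.32]{Mitrea2013} before giving the shorter argument.
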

For convenience we will assume in the following that $\kappa_0<\frac12$, and fix such a $\kappa_0$. Note that $W^{2+\kappa,2}\cap W^{2,2}_0((0,1)^4)=W^{2+\kappa,2}_0((0,1)^4)$ if $\kappa<\frac12$.
\begin{proof}[Proof of Theorem \ref{t:wellposednessWs2}]
This is a special case e.g. of \cite[Theorem 6.32]{Mitrea2013}, but for the convenience of the reader we give the short argument.

We begin with the case $\kappa=0$. In that case we can test the weak form of $\Delta^2u=f$ with $u$ and obtain
\begin{align*}
\|\nabla^2u\|_{L^2((0,1)^4)}^2=(u,\Delta^2u)_{L^2((0,1)^4)}=(u,f)_{L^2((0,1)^4)}\le\|u\|_{W^{2,2}((0,1)^4)}\|f\|_{W^{-2,2}((0,1)^4)}\,.
\end{align*}
The Poincar\'e inequality implies $\|u\|_{W^{2,2}((0,1)^4)}\le C\|\nabla^2u\|_{L^2((0,1)^4)}$ and so we obtain \eqref{e:wellposednessWs2_e1}.

For the general case we can use a stability result for analytic families of operators on Banach spaces: The spaces $W^{s,2}((0,1)^4)$ and $W_0^{s,2}((0,1)^4)$ each form an interpolation family with respect to complex interpolation, and so by \cite[Proposition 4.1]{Tabacco1988} the set of those $s$ for which $\Delta^2\colon W_0^{s,2}((0,1)^4)\to W^{s-4,2}((0,1)^4)$ has a bounded inverse is open. We know that this set contains $2$, so the existence of $\kappa_0$ as in the theorem follows.
\end{proof}

Next we state some estimates for $G$. We begin by estimating the regular part of $G$ in certain Sobolev norms. Recall that $\hat G^{(r)}(x,y)=\hat G(x,y)+\frac{\log r}{\lambda^2}$ for any $r>0$.

\begin{lemma}\label{l:estimateGWs2}
Let $\kappa_0$ be as in Theorem \ref{t:wellposednessWs2}, and let $0\le\kappa\le\kappa_0$. Let $K\ge2$, $r>0$, $y\in(0,1)^4$ be such that $\frac{d(y)}{K}\le r\le\frac{d(y)}{2}$. Then
\begin{equation}
	\left\|G_y-\eta_y^{(r)}\hat G_y^{(r)}\right\|_{W^{2+\kappa,2}((0,1)^4)}\le\frac{C_{K,\kappa}}{r^\kappa}
\end{equation}
for a constant $C_{K,\kappa}$ depending only on $K$ and $\kappa$.
\end{lemma}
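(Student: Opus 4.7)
Set $v:=G_y-\eta_y^{(r)}\hat G_y^{(r)}$. Because $r\le d(y)/2$, the cutoff $\eta_y^{(r)}$ is supported in $B_r(y)\subset(0,1)^4$ and equals $1$ on $B_{r/2}(y)$, so the perturbation $\eta_y^{(r)}\hat G_y^{(r)}$ vanishes in a neighbourhood of $\partial(0,1)^4$ and $v$ inherits from $G_y$ the homogeneous Dirichlet data $v=\partial_\nu v=0$ on $\partial(0,1)^4$. In particular $v\in W^{2,2}_0((0,1)^4)$, and once the right-hand side of the biharmonic equation satisfied by $v$ is controlled in $W^{-2+\kappa,2}$, an application of Theorem \ref{t:wellposednessWs2} will close the argument.

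The key observation is that $\hat G_y^{(r)}$ differs from the fundamental solution $\hat G_y$ only by the additive constant $\log r/\lambda^2$, so $\Delta^2\hat G_y^{(r)}=\delta_y$ as distributions. Using $\eta_y^{(r)}(y)=1$ and the Leibniz rule,
\[
\Delta^2 v=\delta_y-\Delta^2\bigl(\eta_y^{(r)}\hat G_y^{(r)}\bigr)=\delta_y-\eta_y^{(r)}\,\Delta^2\hat G_y^{(r)}-g=-g,
\]
where $g:=\sum_{|\alpha|+|\beta|=4,\,|\alpha|\ge 1}c_{\alpha\beta}\,\partial^\alpha\eta_y^{(r)}\,\partial^\beta\hat G_y^{(r)}$ collects all Leibniz terms carrying at least one derivative of the cutoff, and the two Dirac masses cancel. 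Every summand of $g$ is supported in the annulus $A:=B_r(y)\setminus B_{r/2}(y)$. On $A$ one has $|x-y|\sim r$, so $|\hat G_y^{(r)}(x)|=\lambda^{-2}|\log(|x-y|/r)|\le C$ and $|\partial^\beta\hat G_y^{(r)}|\le C r^{-|\beta|}$ for $|\beta|\ge 1$; combined with $|\partial^\alpha\eta_y^{(r)}|\le Cr^{-|\alpha|}$ this gives the pointwise bound $\|g\|_{L^\infty(A)}\le Cr^{-4}$.

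The remaining, and only delicate, step is to convert this into $\|g\|_{W^{-2+\kappa,2}((0,1)^4)}\le C_{K,\kappa}r^{-\kappa}$. A naive $L^2$ bound only gives $\|g\|_{L^2}\le Cr^{-2}$, which is far too weak; instead I dualise and exploit the sharp four-dimensional Sobolev embedding. For any $\phi\in W^{2-\kappa,2}_0((0,1)^4)$ with $\|\phi\|_{W^{2-\kappa,2}_0}\le 1$, the embedding $W^{2-\kappa,2}((0,1)^4)\hookrightarrow L^{4/\kappa}((0,1)^4)$ together with Hölder on $A$ with dual exponent $p=4/(4-\kappa)$ yields
\[
\Bigl|\int g\,\phi\,\Bigr|\le \|g\|_{L^p(A)}\,\|\phi\|_{L^{4/\kappa}}\le\bigl(Cr^{-4}\bigr)\,|A|^{1/p}\,C_\kappa\,\|\phi\|_{W^{2-\kappa,2}}\le C_\kappa r^{-\kappa},
\]
since $|A|\sim r^4$ and $4/p=4-\kappa$. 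This gives the desired bound on $\|g\|_{W^{-2+\kappa,2}}$, and Theorem \ref{t:wellposednessWs2} applied to $v$ delivers $\|v\|_{W^{2+\kappa,2}((0,1)^4)}\le C_\kappa\|g\|_{W^{-2+\kappa,2}}\le C_{K,\kappa}r^{-\kappa}$. The main obstacle is precisely extracting the sharp $r^{-\kappa}$ scaling from the pointwise estimate $|g|\le Cr^{-4}$ on a set of volume $r^4$; everything else is algebraic bookkeeping or a direct appeal to the well-posedness theorem.
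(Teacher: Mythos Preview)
Your argument is clean and correct for $0<\kappa\le\kappa_0$, but it does not cover the endpoint $\kappa=0$, which is explicitly in the range of the lemma (and is in fact used later, e.g.\ in the proof of Lemma~\ref{l:estimateGpointwise}). The problem is precisely the Sobolev embedding you invoke: in dimension $4$ the case $\kappa=0$ would require $W^{2,2}((0,1)^4)\hookrightarrow L^\infty((0,1)^4)$, which fails at this critical exponent. Your constant $C_\kappa$ coming from the embedding blows up as $\kappa\to0^+$, and since $r$ can be arbitrarily small (as $d(y)\to0$), the bound $C_\kappa r^{-\kappa}$ does not imply a uniform bound at $\kappa=0$ by any limiting or monotonicity argument.

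The paper avoids this by proving the two endpoint estimates $\kappa=0$ and $\kappa=2$ separately and then interpolating. The case $\kappa=2$ is just your $L^\infty$ bound $|g|\le Cr^{-4}$ on a set of measure $\sim r^4$. For $\kappa=0$ the paper writes $g=-\chi_y^{(r)}\Delta^2(\eta_y^{(r)}\hat G_y^{(r)})$ with a further cutoff $\chi_y^{(r)}$, integrates by parts twice to move two derivatives onto $\chi_y^{(r)}\varphi$, and bounds $\|\Delta(\chi_y^{(r)}\varphi)\|_{L^2}\le C_K$ using the Poincar\'e inequality on the cube $y+(-d(y),d(y))^4$ (this is where the dependence on $K$ enters). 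Together with $\|\Delta(\eta_y^{(r)}\hat G_y^{(r)})\|_{L^2(B_{2r}\setminus B_{r/4})}\le C$ this gives $\|g\|_{W^{-2,2}}\le C_K$. Your direct duality/H\"older route is arguably slicker for $\kappa>0$ than the interpolation step, but you still need some substitute for this $\kappa=0$ argument to complete the proof as stated.
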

\begin{proof}
Let $H^{(r)}=G_y-\eta_y^{(r)}\hat G_y^{(r)}$. By Theorem \ref{t:wellposednessWs2} it suffices to show
\begin{equation}
	\|\Delta^2H^{(r)}\|_{W^{-2+\kappa,2}((0,1)^4)}\le\frac{C_{K,\kappa}}{r^\kappa}\,.\label{e:estimateGWs2}
\end{equation}
By standard interpolation theory and our assumption $\kappa\in[0,\kappa_0]\subset[0,2]$ it suffices to establish this for $\kappa\in\{0,2\}$.

Observe that $\Delta^2H^{(r)}$ is zero in $(0,1)^4\setminus B_r(y)$ as well as in $B_{r/2}(y)$ (as the two singularities cancel out). This means that $\Delta^2 H^{(r)}$ is supported in $B_r(y)\setminus B_{r/2}(y)$ and there it is equal to $-\Delta^2\left(\eta_y^{(r)}\hat G_y^{(r)}\right)$. We have an explicit formula for $\hat G_y^{(r)}$, and so it is straightforward to check that
$\left|\Delta^2\left(\eta_y^{(r)}\hat G_y^{(r)}\right)\right|$ is bounded by $\frac{C_K}{r^4}$ on $B_r(y)\setminus B_{r/2}(y)$. This easily implies \eqref{e:estimateGWs2} for $\kappa=2$.

For the case $\kappa=0$ we need to be slightly more careful:
Let $\chi_y^{(r)}$ be a cut-off function that is 1 on $B_r(y)\setminus B_{r/2}(y)$ and zero outside $B_{2r}(y)\setminus B_{r/4}(y)$ (e.g. $\chi_y^{(r)}=\eta_y^{(2r)}-\eta_y^{(r/2)}$). Then we have $\Delta^2H^{(r)}=-\chi_y^{(r)}\Delta^2\left(\eta_y^{(r)}\hat G_y^{(r)}\right)$ and thus we can calculate
\begin{align*}
	\|\Delta^2H^{(r)}\|_{W^{-2,2}((0,1)^4)}&=\sup_{\|\varphi\|_{ W^{2,2}_0((0,1)^4)}=1}\int\Delta^2H^{(r)}\varphi\\
	&=\sup_{\|\varphi\|_{ W^{2,2}_0((0,1)^4)}=1}\int-\Delta^2\left(\eta_y^{(r)}\hat G_y^{(r)}\right)\chi_y^{(r)}\varphi\\
	&=\sup_{\|\varphi\|_{ W^{2,2}_0((0,1)^4)}=1}\int-\Delta\left(\eta_y^{(r)}\hat G_y^{(r)}\right)\Delta(\chi_y^{(r)}\varphi)\\
	&\le C\left\|\Delta\left(\eta_y^{(r)}\hat G_y^{(r)}\right)\right\|_{L^2(B_{2r}(y)\setminus B_{r/4}(y))}\sup_{\|\varphi\|_{ W^{2,2}_0((0,1)^4)}=1}\|\Delta(\chi_y^{(r)}\varphi)\|_{L^2((0,1)^4)}\,.
\end{align*}
To estimate the second factor we proceed as in the calculation that led to \eqref{eq:poinsob6}. We have a Poincar\'e inequality
\begin{equation}
\|u\|_{L^2(z+(-s,s)^4)}\le Cs\|\nabla u\|_{L^2(z+(-s,s)^4)}\label{e:poincarezeroonface}
\end{equation}
for any $u\in W^{1,2}(z+(-s,s)^4)$ that is zero (in the sense of traces) on one of the faces of $z+(-s,s)^4$. This is the continuous analogue to Lemma \ref{l:poincare}, and the proof is very similar. Using \eqref{e:poincarezeroonface} we can estimate
\begin{align*}
	\|\Delta(\chi_y^{(r)}\varphi)\|_{L^2((0,1)^4)}&\le C\|\nabla^2\varphi\|_{L^2(B_{d(y)}(y))}+\frac{C}{r}\|\nabla\varphi\|_{L^2(B_{d(y)}(y))}+\frac{C}{r^2}\|\varphi\|_{L^2(B_{d(y)}(y))}\\
	&\le C\left(1+\frac{d(y)}{r}+\frac{d(y)^2}{r^2}\right)\|\nabla^2\varphi\|_{L^2(y+(-d(y),d(y))^4)}\\
	&\le C_K\|\varphi\|_{W^{2,2}_0((0,1)^4)}\,.
\end{align*}

We also have that $\Delta\left(\eta_y^{(r)}\hat G^{(r)}_y\right)$ is bounded by $\frac{C}{r^2}$ on $B_{2r}(y)\setminus B_{r/4}(y)$ and hence 
\[\left\|\Delta\left(\eta_y^{(r)}\hat G_y^{(r)}\right)\right\|_{L^2(B_{2r}(y)\setminus B_{r/4}(y))}\le Cr^2\cdot\frac{1}{r^2}= C\,.\]
Using this we obtain \eqref{e:estimateGWs2} for $\kappa=0$.
\end{proof}

Next we give some estimates on the local behaviour of $G$. The first two allow us to control $G$ far from and close to the singularity, respectively, while the last one expresses the H\"older continuity of $G-\hat G$ near the diagonal.
\begin{lemma}\label{l:estimateGpointwise}
Let $\kappa_0$ be as in Theorem \ref{t:wellposednessWs2}. Let $y\in(0,1)^4$. The function $G_y$ is smooth on $(0,1)^4\setminus\{y\}$, and $G-\hat G$ is symmetric and smooth on $(0,1)^4\times(0,1)^4\setminus\{(x,x)\colon x\in(0,1)^4\}$ and can be extended continuously to $(0,1)^4\times(0,1)^4$. Slightly abusing notation, we write
\[G(y,y)-\hat G(y,y):=\lim_{\substack{(y',y'')\to(y,y)\\y'\neq y''}}G(y',y'')-\hat G(y',y'')\,.\]
Let $K\ge1$. We have the following estimates, where $\frac{d(y)}{K}\le r\le \frac{d(y)}{2}$:
\begin{align}
|G(x,y)|\le C\quad&\text{if }|x-y|\ge \frac{d(y)}{4}\,,\label{e:boundGfar}\\
|G(x,y)-\hat G^{(r)}(x,y)|\le C_K\quad&\text{if }|x-y|\le d(y)\,.\label{e:boundGnear}
\end{align}
Furthermore if $r>0$ is arbitrary, $|y'-y|\le \frac{d(y)}{8}$ and $|y''-y|\le \frac{d(y)}{8}$ we have the estimate
\begin{align}
\left|G(y',y'')-\hat G^{(r)}(y',y'')-\left(G(y,y)-\hat G^{(r)}(y,y)\right)\right|\le C\frac{|y'-y|^{\kappa_0}+|y''-y|^{\kappa_0}}{d(y)^{\kappa_0}}\,.\label{e:boundGholder}
\end{align}
\end{lemma}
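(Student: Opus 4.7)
My plan is to derive all three pointwise estimates from Lemma \ref{l:estimateGWs2}, combined with the Sobolev embedding $W^{2+\kappa_0,2}((0,1)^4)\hookrightarrow C^{0,\kappa_0}((0,1)^4)$, which holds in dimension four precisely because $\kappa_0<\tfrac12$. The preliminary regularity assertions are standard: away from the pole, $G_y$ solves $\Delta^2 G_y=0$ distributionally, so interior elliptic regularity gives $G_y\in C^\infty((0,1)^4\setminus\{y\})$; the symmetry $G(x,y)=G(y,x)$ follows from Green's second identity applied to $G_y$ and $G_{y'}$ using the homogeneous Dirichlet data, and combined with the symmetry of $\hat G$ this gives the symmetry of $G-\hat G$; joint smoothness off the diagonal follows by interior regularity in each argument. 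The continuous extension across the diagonal will come at the end, as a by-product of \eqref{e:boundGholder}.

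For \eqref{e:boundGnear} and \eqref{e:boundGfar} I fix $r\in[d(y)/K,d(y)/2]$, set $H^{(r)}(x)=G(x,y)-\eta_y^{(r)}(x)\hat G_y^{(r)}(x)$, and use Lemma \ref{l:estimateGWs2} together with the Sobolev embedding to obtain $\|H^{(r)}\|_{L^\infty}+[H^{(r)}]_{C^{0,\kappa_0}}\le C_K r^{-\kappa_0}$. I then decompose $G-\hat G^{(r)}=H^{(r)}+(\eta_y^{(r)}-1)\hat G_y^{(r)}$ and split $|x-y|\le d(y)$ into three pieces: $B_{r/2}(y)$, where the second term vanishes; the annulus $B_r\setminus B_{r/2}$, where both $\eta_y^{(r)}$ and $\hat G_y^{(r)}=\lambda^{-2}\log(r/|x-y|)$ are bounded; and $B_{d(y)}\setminus B_r$, where $\eta_y^{(r)}=0$ and $|\hat G_y^{(r)}|\le C_K$ since $r/|x-y|\in[1/K,1]$. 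The main obstacle is that the $L^\infty$ bound on $H^{(r)}$ carries a factor $r^{-\kappa_0}$ that threatens to blow up as $d(y)\to 0$; I remove this by rescaling $(0,1)^4$ by $1/d(y)$ about $y$ so that the pole sits at unit distance from the boundary of the rescaled domain, applying the Lemma with a fixed $r$ of order one, and transferring back using the scale-invariance of the biharmonic operator in dimension four. For \eqref{e:boundGfar}, the region $d(y)/4\le|x-y|\le d(y)$ is covered directly by \eqref{e:boundGnear}, and for $|x-y|>d(y)$ I use the symmetry $G(x,y)=G(y,x)$ to swap the roles and replace $d(y)$ by $d(x)$; the remaining case in which both $d(x)$ and $d(y)$ are small is again handled by the same rescaling argument.

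For \eqref{e:boundGholder} I take $r=d(y)/8$, which is admissible because the hypothesis $|y'-y|,|y''-y|\le d(y)/8$ forces $d(y'),d(y'')\in[7d(y)/8,9d(y)/8]$. The Hölder seminorm bound $[H^{(r)}_{y''}]_{C^{0,\kappa_0}}\le C/d(y)^{\kappa_0}$, applied to the pair $(y',y)$ with the second argument fixed at $y''$, yields $|H^{(r)}_{y''}(y')-H^{(r)}_{y''}(y)|\le C|y'-y|^{\kappa_0}/d(y)^{\kappa_0}$; using the symmetry $G(x,y)=G(y,x)$ and applying the same argument after exchanging the roles of the two arguments gives the full estimate by the triangle inequality. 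Crucially, the shift $\tfrac{\log r}{\lambda^2}$ in $\hat G^{(r)}$ is independent of $(y',y'')$ and so drops out of the difference, explaining why the constant on the right-hand side does not depend on $r$. The continuous extension of $G-\hat G$ across the diagonal asserted in the lemma then follows immediately from \eqref{e:boundGholder}, since the latter shows that $G-\hat G^{(r)}$ is uniformly Cauchy as $(y',y'')\to(y,y)$ off the diagonal.
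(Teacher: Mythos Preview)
For \eqref{e:boundGholder} your approach and the paper's coincide: both apply Lemma~\ref{l:estimateGWs2} at exponent $\kappa_0$, use the embedding $W^{2+\kappa_0,2}\hookrightarrow C^{0,\kappa_0}$, and telescope via the symmetry of $G$.

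For \eqref{e:boundGfar} and \eqref{e:boundGnear} you take a genuinely different route. The paper invokes Lemma~\ref{l:estimateGWs2} at $\kappa=0$, which already gives $\|\nabla^2 H^{(r)}\|_{L^2}\le C$ with a constant \emph{independent of $d(y)$}, and then upgrades this to a pointwise bound by a Caccioppoli inequality, the Poincar\'e inequality with zero on a face, and a Gagliardo--Nirenberg interpolation. You instead stay at $\kappa=\kappa_0$, accept the factor $r^{-\kappa_0}$ from the Sobolev embedding, and try to remove it by rescaling. As written, that step has a gap: Lemma~\ref{l:estimateGWs2} is stated only on $(0,1)^4$, so ``applying the Lemma with a fixed $r$ of order one'' on the dilated cube of sidelength $1/d(y)$ is not justified, and your case split for \eqref{e:boundGfar} (``swap roles\ldots remaining case in which both $d(x)$ and $d(y)$ are small'') does not cover the regime $d(x)\gg d(y)$ with $|x-y|>d(y)$. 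Your idea can be repaired, though, and in fact without any rescaling: the embedding already gives $[H^{(r)}]_{C^{0,\kappa_0}}\le C_K/d(y)^{\kappa_0}$, and since $H^{(r)}$ vanishes on $\partial(0,1)^4$ one gets $|H^{(r)}(x)|\le C_K\big(d(x)/d(y)\big)^{\kappa_0}$, which is bounded once $d(x)\le 2d(y)$; the complementary case $d(x)>2d(y)$ forces $|x-y|\ge d(x)/2$ and is handled by swapping $x$ and $y$, exactly as the paper does. (A minor point: the embedding $W^{2+\kappa_0,2}\hookrightarrow C^{0,\kappa_0}$ in dimension four holds for every $\kappa_0>0$; the constraint $\kappa_0<\tfrac12$ is used elsewhere, to extend $W^{2+\kappa_0,2}_0$-functions by zero.)
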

\begin{proof}
The smoothness of $G$ and $G-\hat G$ follows from standard regularity theory for higher order elliptic equations. The estimate \eqref{e:boundGfar} is given in \cite[Theorem 8.1]{Mayboroda2014}. There also a variant of \eqref{e:boundGnear} (without the correction $\frac{\log r}{\lambda^2}$ and with slightly worse error term) is given. The results in \cite{Mayboroda2014} however are in a far more general setting, so we prefer to give an elementary proof of the specific estimates we need.

We use a standard Caccioppoli inequality (see e.g. \cite[Capitolo II, Teorema 3.II or Teorema 6.I]{Campanato1980}): If $u\in W^{2,2}(B_s(z))$ and $\Delta^2u=0$ in $B_s(z)$ then
\begin{equation}\label{e:caccioppoli}
\|\nabla^2u\|_{L^\infty(B_{s/2}(z))}\le \frac{C}{s^2}\|\nabla^2u\|_{L^2(B_s(z))}\,.
\end{equation}
We will also need a special case of the Gagliardo-Nirenberg interpolation inequality, namely
\begin{equation}\label{e:estimateGpointwise_2}
\|u\|_{L^\infty(B_s(z))}\le C\left(s^2\|\nabla^2u\|_{L^\infty(B_s(z))}+\frac{1}{s^2}\|u\|_{L^2(B_s(z))}\right)\,.
\end{equation}
To see this, observe first that by scaling we can assume $s=1$. The Poincar\'e inequality implies that $\|u-a-b\cdot (\cdot-z)\|_{L^\infty(B_1(z))}\le C\|\nabla^2u\|_{L^\infty(B_1(z))}$, where $a=\frac{1}{|B_1|}\int u$ and $b=\frac{1}{|B_1|}\int \nabla u$, and so we only have to bound $a$ and $b$. We have $|a|\le C\|u\|_{L^2(B_1(z))}$, and the estimate $\|u-a\|_{L^2(B_1(z))}\le\|u\|_{L^2(B_1(z))}$ implies 
\begin{align*}
|b|&\le C\|b\cdot (\cdot-z)\|_{L^2(B_1(z))}\\
&\le C\left(\|u-a-b\cdot (\cdot-z)\|_{L^2(B_1(z))}+\|u-a\|_{L^2(B_1(z))}\right)\\
&\le C\left(\|\nabla^2u\|_{L^\infty(B_1(z))}+\|u\|_{L^2(B_1(z))}\right)\,.
\end{align*}
This completes the proof of \eqref{e:estimateGpointwise_2}.

After these preparations we can now begin with the proof of \eqref{e:boundGfar}. We first assume that $d(x)\le 2d(y)$. Let $H^{(d(y)/8)}=G_y-\eta_y^{(d(y)/8)}\hat G_y^{(d(y)/8)}$. Lemma \ref{l:estimateGWs2} with $\kappa=0$ implies that 
\begin{equation}\label{e:estimateGpointwise_1}
	\|\nabla^2H^{(d(y)/8)}\|_{L^2((0,1)^4)}\le C\,.
\end{equation}
The function $H^{(d(y)/8)}$ agrees with $G_y$ on $(0,1)^4\setminus B_{d(y)/8}(y)$. Because $\frac{d(x)}{16}+\frac{d(y)}{8}\le \frac{d(y)}{4}\le |x-y|_\infty$ we have $B_{d(x)/16}(x)\cap B_{d(y)/8}(y)=\varnothing$ and thus \eqref{e:estimateGpointwise_1} implies
\[\|\nabla^2G_y\|_{L^2(B_{d(x)/16}(x))}\le C\,.\]
Using the Caccioppoli inequality \eqref{e:caccioppoli} we conclude
\begin{equation}
\|\nabla^2G_y\|_{L^\infty(B_{d(x)/32}(x))}\le \frac{C}{d(x)^2}\,.\label{e:estimateGpointwise_3}
\end{equation}
Next, note that the Poincar\'e inequality \eqref{e:poincarezeroonface} applied on $x+(-d(x),d(x))^4$ and \eqref{e:estimateGpointwise_1} imply that
\[\|H^{(d(y)/8)}\|_{L^2(B_{d(x)}(x))}\le Cd(x)^2\|\nabla^2H^{(d(y)/8)}\|_{L^2(x+(-d(x),d(x))^4)}\le Cd(x)^2\]
and therefore
\[\|G_y\|_{L^2(B_{d(x)/32}(x))}\le Cd(x)^2\,.\]
Recalling \eqref{e:estimateGpointwise_3}, an application of \eqref{e:estimateGpointwise_2} concludes the proof.

It remains to consider the case $d(x)>2d(y)$. In that case $|x-y|\ge d(x)-d(y)\ge\frac{d(x)}{2}$, so we can interchange the roles of $x$ and $y$ and repeat the above proof (using that $G(x,y)=G(y,x)$).

Next we give a proof of \eqref{e:boundGnear}. This is quite similar to the preceding argument. Because $G^{(r)}$ differs from $G^{(d(y))}$ only by at most $\frac{1}{\lambda^2}\log K\le C_K$ we can assume $r=d(y)$. Let again $H^{(d(y))}=G_y-\eta_y^{(d(y))}\hat G_y^{(d(y))}$.
Observe first that if $|x-y|\ge\frac{d(y)}{4}$ then \eqref{e:boundGfar} implies \eqref{e:boundGnear}. Therefore we can restrict our attention to the case $|x-y|\le\frac{d(y)}{4}$. By Lemma \ref{l:estimateGWs2} we have that
\[\|\nabla^2H^{(d(y))}\|_{L^2((0,1)^4)}\le C\,.\]
The function $H^{(d(y))}$ agrees with $G_y-\hat G_y^{(d(y))}$ on $B_{d(y)/2}(y)$. Thus, as before, the Caccioppoli inequality implies that
\[\|\nabla^2(G_y-\hat G_y)\|_{L^\infty(B_{d(y)/4}(y))}\le \frac{C}{d(y)^2}\]
and the Poincar\'e inequality implies
\[\|G_y-\hat G_y\|_{L^2(B_{d(y)/4}(y))}\le \|H^{(d(y))}\|_{L^2(B_{d(y)}(y))}\le Cd(y)^2\]
so that the conclusion follows from the interpolation inequality \eqref{e:estimateGpointwise_2}.

For \eqref{e:boundGholder} observe that by Lemma \ref{l:estimateGWs2} we control the $W^{2+\kappa_0,2}$-norm of $G_y-\eta_y^{(d(y))}\hat G_y^{(d(y))}$. That Sobolev space embeds into the H\"older space $C^{0,\kappa_0}$ and so we have
\begin{align*}
\left[G_y-\eta_y^{(d(y))}\hat G_y^{(d(y))}\right]_{C^{0,\kappa_0}((0,1)^4)}&\le C\left\|G_y-\eta_y^{(d(y))}\hat G_y^{(d(y))}\right\|_{W^{2+\kappa_0,2}((0,1)^4)}\le \frac{C}{d(y)^{\kappa_0}}\,.
\end{align*}
Because $G_y-\eta_y^{(d(y))}\hat G_y^{(d(y))}$ agrees with $G_y-\hat G_y^{(d(y))}$ on $B_{d(y)/2}(y)$ this implies 
\[\left|G(y',y)-\hat G^{(d(y))}(y',y)-\left(G(y,y)-\hat G^{(d(y))}(y,y)\right)\right|\le C\frac{|y'-y|^{\kappa_0}}{d(y)^{\kappa_0}}\,.\]
If we add and subtract $\frac{\log r-\log d(y)}{\lambda^2}$ on the left-hand side we obtain
\[\left|G(y',y)-\hat G^{(r)}(y',y)-\left(G(y,y)-\hat G^{(r)}(y,y)\right)\right|\le C\frac{|y'-y|^{\kappa_0}}{d(y)^{\kappa_0}}\,.\]
Similarly we obtain
\[\left|G(y'',y')-\hat G^{(r)}(y'',y')-\left(G(y,y')-\hat G^{(r)}(y,y')\right)\right|\le C\frac{|y''-y|^{\kappa_0}}{d(y')^{\kappa_0}}\]
where we used that $d(y')\ge\frac78d(y)$ so that $y,y''\in B_{d(y')/2}(y')$. If we add the last two estimates and use once again that $d(y')\ge\frac78d(y)$ we arrive at \eqref{e:boundGholder}.
\end{proof}

\section{Proof of the main theorems}\label{s:proofmainthm}
In this section we will finally prove that $G_h$ satisfies \ref{B.0'}, \ref{B.1'}, \ref{B.2'}, \ref{B.3'}, which according to Observation \ref{o:estimatesGh} implies Theorem \ref{t:membranemodifiedassumptions}.

Recall that $G_h$ is the Green's function of the discrete Bilaplacian on $V_h$ with zero boundary data outside $V_h$, $G$ is the Green's function of the continuous Bilaplacian on $(0,1)^4$ with zero Dirichlet boundary data, and $\hat G_h^{(r)}$ and $\hat G^{(r)}$ are shifted versions of the discrete and continuous full space Green's function.

The main technical statement used in the proof of Theorem \ref{t:membranemodifiedassumptions} will be the following.
\begin{lemma}\label{l:discandcontGreenareclose}
Let $\kappa_0$ be as in Theorem \ref{t:wellposednessWs2}. Let $K\ge2$, and $r\ge192h$. Then for all $x,y\in V_h$ with $\frac{d(y)}{K}\le r\le\frac{d(y)}{2}$ we have
\begin{align*}
&\left|\left(G_h(x,y)-\eta^{(r)}_{h,y}(x)\hat G^{(r)}_h(x,y)\right)- \left(G(x,y)-\eta_y^{(r)}(x)\hat G^{(r)}(x,y)\right)\right|\le C_K\frac{h^{{\kappa_0}}}{r^{\kappa_0}}\sqrt{\log\left(2+\frac{d(x)}{h}\right)}\,.
\end{align*}
\end{lemma}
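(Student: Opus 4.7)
The plan is to set $u(x) := G(x,y) - \eta^{(r)}_y(x)\hat G^{(r)}(x,y)$ and $\bar u_h(x) := G_h(x,y) - \eta^{(r)}_{h,y}(x)\hat G^{(r)}_h(x,y)$, so that the quantity in the lemma is exactly $\bar u_h(x) - u(x)$. By Lemma~\ref{l:estimateGWs2} the continuous regular part $u$ lies in $W^{2+\kappa_0,2}((0,1)^4)$ with norm at most $C_K r^{-\kappa_0}$, and since $\kappa_0<\tfrac12$ we actually have $u\in W^{2+\kappa_0,2}_0((0,1)^4)$. Extending by zero to $\tilde u\in W^{2+\kappa_0,2}(\R^4)$ and invoking Theorem~\ref{t:mss} with $s=2+\kappa_0$ produces a discrete function $u_h^\flat$, vanishing on $(h\Z)^4\setminus V_h$, solving $\Delta_h^2 u_h^\flat=T^{h,3,3,3,3}\Delta^2\tilde u$ in $V_h$, and satisfying $\|u_h^\flat-\tilde u\|_{W^{2,2}_h((h\Z)^4)}\le C_K(h/r)^{\kappa_0}$. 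Lemma~\ref{l:poincaresobolev} immediately converts this to the pointwise estimate
\[
|u_h^\flat(x)-u(x)|\le C_K(h/r)^{\kappa_0}\sqrt{\log(2+d(x)/h)}\qquad(x\in V_h),
\]
so by the triangle inequality it is enough to prove the same bound for $e_h:=\bar u_h-u_h^\flat$.

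The structural identity that makes this feasible is obtained from $\Delta_h^2\hat G^{(r)}_{h,y}=\delta_{h,y}$ on all of $(h\Z)^4$ and $\Delta^2\hat G^{(r)}_y=\delta_y$ as a distribution, which cancel the delta sources in $\Delta_h^2 G_{h,y}$ and $\Delta^2 G_y$. Writing $w:=(1-\eta^{(r)}_y)\hat G^{(r)}_y$ and $w_h:=(1-\eta^{(r)}_{h,y})\hat G^{(r)}_{h,y}$, one obtains
\[
\Delta^2 u=\Delta^2 w\quad\text{in }(0,1)^4,\qquad \Delta_h^2\bar u_h=\Delta_h^2 w_h\quad\text{in }V_h,
\]
with zero boundary data in both cases. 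The function $w$ is smooth on all of $\R^4$ (the singularity of $\hat G^{(r)}_y$ at $y$ has been cut away), while by Lemma~\ref{l:estgreenfull} the discrete function $w_h$ differs from the restriction of $w$ to the lattice by terms of order $h$ times discrete derivatives of $\hat G^{(r)}_h-\hat G^{(r)}$, which are integrable on the relevant annuli. Consequently, for $x$ away from $\partial(0,1)^4$, $\Delta_h^2 e_h(x)=\Delta_h^2 w_h(x)-T^{h,3,3,3,3}\Delta^2 w(x)$ is precisely the sort of finite-difference consistency error that Theorem~\ref{t:mss} is designed to handle.

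I would then repeat the scheme from the proof of Theorem~\ref{t:mss}, now applied to $w$ in place of $u$: decompose $\Delta_h^2 w_h-T^{h,3,3,3,3}\Delta^2 w=\sum_i D^h_iD^h_{-i}\tilde g_i$ with $\tilde g_i:=\Delta_h w_h-T^{h,3,3,3,3-2e_i}\Delta w$, apply the Bramble--Hilbert lemma on each stencil of size $h$, and combine with summation-by-parts (using that $e_h$ vanishes outside $V_h$) to arrive at $\|\nabla_h^2 e_h\|_{L^2_h}\le C_K h^{\kappa_0}\|w\|_{W^{2+\kappa_0,2}((0,1)^4)}$. The required estimate $\|w\|_{W^{2+\kappa_0,2}((0,1)^4)}\le C_K r^{-\kappa_0}$ is a direct consequence of the pointwise bound $|\partial^\alpha\hat G^{(r)}_y(x)|\le C_\alpha|x-y|^{-|\alpha|}$ combined with the fact that $w$ vanishes on $B_{r/2}(y)$ and the cutoff annulus has width and volume comparable to $r$ and $r^4$ respectively. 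A final application of Lemma~\ref{l:poincaresobolev} converts the $W^{2,2}_h$-bound on $e_h$ into the desired pointwise estimate, completing the argument.

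The main obstacle is the careful bookkeeping near $\partial(0,1)^4$. Since $u\in W^{2,2}_0$ has vanishing trace and normal derivative on $\partial(0,1)^4$ but $\Delta u$ and $\partial_\nu\Delta u$ typically do not vanish there, the distribution $\Delta^2\tilde u$ acquires measure-valued boundary contributions, so that $T^{h,3,3,3,3}\Delta^2\tilde u(x)$ differs from $T^{h,3,3,3,3}\Delta^2 w(x)$ in a layer of width $O(h)$ around $\partial(0,1)^4$. Also, the replacement of $w|_{(h\Z)^4}$ by $w_h$ introduces error terms supported in the annulus where the cutoff is active. Both of these defects must be tracked through the Bramble--Hilbert estimate and shown to still contribute at most $C_K(h/r)^{\kappa_0}$ to $\|\nabla_h^2 e_h\|_{L^2_h}$; this is exactly the sort of truncation that the proof of Theorem~\ref{t:mss} already absorbs into its $W^{2,2}_h$-bound, so the extension should go through after a patient accounting.
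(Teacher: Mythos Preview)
Your overall architecture is the same as the paper's: set $H=u$, $H_h=\bar u_h$, introduce the intermediate $u_h^\flat=\tilde H_h$ via Theorem~\ref{t:mss}, bound $\tilde H_h-H$ by Theorem~\ref{t:mss} plus Lemma~\ref{l:estimateGWs2}, and then finish with Lemma~\ref{l:poincaresobolev}. That part is fine and matches the paper exactly.

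The gap is in your treatment of $e_h=\bar u_h-u_h^\flat$. You propose to bound $\tilde g_i=\Delta_h w_h-T^{h,3,3,3,3-2e_i}\Delta w$ by the Bramble--Hilbert lemma. But Bramble--Hilbert controls a bounded linear functional of a \emph{single} function that annihilates polynomials; your $\tilde g_i$ is built from \emph{two} different objects, the discrete $w_h=(1-\eta_{h,y}^{(r)})\hat G_{h,y}^{(r)}$ and the continuous $w=(1-\eta_y^{(r)})\hat G_y^{(r)}$, and $w_h$ is \emph{not} the restriction of $w$ to the lattice. So the lemma does not apply as stated. You would have to split $\tilde g_i=\Delta_h(w_h-w)+(\Delta_h w-T^*_i\Delta w)$; only the second piece is a Bramble--Hilbert functional of $w$, while the first piece forces you back to a direct comparison of $\hat G_h^{(r)}$ and $\hat G^{(r)}$ via Lemma~\ref{l:estgreenfull}. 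Relatedly, your justification for $\|w\|_{W^{2+\kappa_0,2}((0,1)^4)}\le C_K r^{-\kappa_0}$ is wrong: $w$ is \emph{not} supported on the cutoff annulus --- it equals $\hat G_y^{(r)}$ on all of $(0,1)^4\setminus B_r(y)$ --- so the ``volume $r^4$'' argument fails. The bound is salvageable by integrating the pointwise decay of $\partial^\alpha\hat G_y$ over the whole cube, but that is not what you wrote.

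The paper avoids both problems by not invoking Bramble--Hilbert for this piece at all. It first observes that $\Delta_h^2(H_h-\tilde H_h)$ is supported where the cutoff $\eta$ is nonconstant, inserts an explicit annular cutoff $\chi_{h,y}^{(r)}$, and then writes the source as $-\chi\sum_iD^h_iD^h_{-i}\big(\Delta_h(\eta_{h,y}^{(r)}\hat G_{h,y}^{(r)})-T^*_i\Delta(\eta_y^{(r)}\hat G_y^{(r)})\big)$. On the annulus $Q^h_{8r}(y)\setminus Q^h_{r/32}(y)$ one can compare $\Delta_h(\eta_{h,y}\hat G_{h,y})$ and $\Delta(\eta_y\hat G_y)$ directly from the asymptotics in Lemma~\ref{l:estgreenfull}, obtaining a pointwise bound $O(h/r^3)$ and hence an $L^2_h$ bound $O(h/r)$ --- which is even sharper than the $O((h/r)^{\kappa_0})$ you were aiming for. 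After summation by parts against $\chi\varphi_h$ (with $\|\nabla_h^2(\chi\varphi_h)\|_{L^2_h}\le C_K$ via the discrete Poincar\'e inequality on $Q^h_{d(y)+h}(y)$), this yields $\|H_h-\tilde H_h\|_{W^{2,2}_h}\le C_Kh/r$. The boundary subtlety you flag in your last paragraph is sidestepped precisely because the localisation to the annulus keeps everything at distance $\ge d(y)/2$ from $\partial(0,1)^4$.
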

This lemma is so useful because it simultaneously provides control over the difference between the discrete and continuous Green's function when $x,y$ are far apart and over the difference of the regular part of the discrete and continuous Green's function when $x,y$ are close. 
\begin{proof}[Proof of Lemma \ref{l:discandcontGreenareclose}]
We define $H_h=G_{h,y}-\eta^{(r)}_{h,y}\hat G^{(r)}_{h,y}$ and $H=G_y-\eta_y^{(r)}\hat G_y^{(r)}$.
Let $\tilde H_h$ be the solution of 
\[\begin{array}{rl}
  \Delta_h^2\tilde H_h=T^{h,3,3,3,3}\Delta^2H & \text{in }V_h\\
  \tilde H_h=0 & \text{on }(h\Z)^4\setminus V_h\,.
\end{array}\]
Our goal is to estimate $|H_h(x)-H(x)|$. We will estimate $H_h-\tilde H_h$ and $\tilde H_h-H$ separately.

The estimate of the latter term is straightforward: Using Theorem \ref{t:mss} and Lemma \ref{l:estimateGWs2}, we obtain 
\begin{align*}
	\|\tilde H_h-H\|_{W^{2,2}_h((h\Z)^4)}\le C_Kh^{\kappa_0}\|H\|_{W^{2+\kappa_0,2}((0,1)^4)}\le C_K\frac{h^{\kappa_0}}{r^{\kappa_0}}\,.
\end{align*}	

Estimating $H_h-\tilde H_h$ is more tedious. Similarly as in the proof of Lemma \ref{l:estimateGWs2} we let $\chi_y^{(r)}=\eta_y^{(4r)}-\eta_y^{(r/4)}$ and $\chi^{(r)}_{h,y}$ be the restriction of $\chi_y^{(r)}$ to $(h\Z)^4$.
Then we have
\begin{align}
	\Delta_h^2(H_h-\tilde H_h)&=\Delta_h^2\left(G_{h,y}-\eta_{h,y}^{(r)}\hat G_{h,y}^{(r)}\right)- T^{h,3,3,3,3}\Delta^2\left(G_y-\eta_y^{(r)}\hat G_y^{(r)}\right)\nonumber\\
	&=\chi^{(r)}_{h,y}\Delta_h^2\left(G_{h,y}-\eta_{h,y}^{(r)}\hat G_{h,y}^{(r)}\right)- \chi^{(r)}_{h,y}T^{h,3,3,3,3}\Delta^2\left(G_y-\eta_y^{(r)}\hat G_y^{(r)}\right)\nonumber\\
	&=-\chi^{(r)}_{h,y}\Delta_h^2\left(\eta_{h,y}^{(r)}\hat G_{h,y}^{(r)}\right)+ \chi^{(r)}_{h,y}T^{h,3,3,3,3}\Delta^2\left(\eta_y^{(r)}\hat G_y^{(r)}\right)\nonumber\\
	&=-\chi^{(r)}_{h,y}\Delta_h^2\left(\eta_{h,y}^{(r)}\hat G_{h,y}^{(r)}\right)+ \sum_{i=1}^4\chi^{(r)}_{h,y}T^{h,3,3,3,3}\partial_i^2\Delta\left(\eta_y^{(r)}\hat G_y^{(r)}\right)\nonumber\\
	&=-\chi^{(r)}_{h,y}\sum_{i=1}^4D^h_iD^h_{-i}\left(\Delta_h\left(\eta_{h,y}^{(r)}\hat G_{h,y}^{(r)}\right)+ T^{h,3,3,3,3-2e_i}\Delta\left(\eta_y^{(r)}\hat G_y^{(r)})\right)\right)\label{eq:discandcontGreenareclose5}\,.
\end{align}
Because $H_h-\tilde H_h$ is supported in $V_h$ we have
\begin{align*}
\|\nabla_h^2(H_h-\tilde H_h)\|^2_{L^2_h((h\Z)^4)}&=\left(\Delta_h^2(H_h-\tilde H_h),H_h-\tilde H_h\right)_{L^2_h((h\Z)^4)}\\
&\le \sup_{\substack{\varphi_h=0\text{ on }(h\Z)^4\setminus V_h\\\|\varphi_h\|_{ W^{2,2}_h((h\Z)^4)}=1}}\left(\Delta_h^2(H_h-\tilde H_h),\varphi_h\right)_{L^2_h((h\Z)^4))}\|H_h-\tilde H_h\|_{W^{2,2}_h((h\Z)^4)}
\end{align*}
which together with the Poincar\'e inequality implies that
\[\|H_h-\tilde H_h\|_{W^{2,2}_h((h\Z)^4)}\le C\sup_{\substack{\varphi_h=0\text{ on }(h\Z)^4\setminus V_h\\\|\varphi_h\|_{ W^{2,2}_h((h\Z)^4)}=1}}\left(\Delta_h^2(H_h-\tilde H_h),\varphi_h\right)_{L^2_h((h\Z)^4))}\,.\]
Combining this with \eqref{eq:discandcontGreenareclose5}, and abbreviating $T^*_i:=T^{h,3,3,3,3-2e_i}$ we see that
\begin{align}
&\|H_h-\tilde H_h\|_{W^{2,2}_h((h\Z)^4)}\nonumber\\
&\quad\le C\sup_{\substack{\varphi_h=0\text{ on }(h\Z)^4\setminus V_h\\
\|\varphi_h\|_{ W^{2,2}_h((h\Z)^4)}=1}}\sum_{i=1}^4\left(D^h_iD^h_{-i}\left(-\Delta_h\left(\eta_{h,y}^{(r)}\hat G_{h,y}^{(r)}\right)+ T^*_i\Delta\left(\eta_y^{(r)}\hat G_y^{(r)}\right)\right),\chi^{(r)}_{h,y}\varphi_h\right)_{L^2_h((h\Z)^4))}\nonumber\\
&\quad\le C\sup_{\substack{\varphi_h=0\text{ on }(h\Z)^4\setminus V_h\\
\|\varphi_h\|_{ W^{2,2}_h((h\Z)^4)}=1}}\sum_{i=1}^4\left(-\Delta_h\left(\eta_{h,y}^{(r)}\hat G_{h,y}^{(r)}\right)+ T^*_i\Delta\left(\eta_y^{(r)}\hat G_y^{(r)}\right),D^h_iD^h_{-i}\chi^{(r)}_{h,y}\varphi_h\right)_{L^2_h((h\Z)^4))}\nonumber\\
&\quad\le C\sum_{i=1}^4\left\|-\Delta_h\left(\eta_{h,y}^{(r)}\hat G_{h,y}^{(r)}\right)+ T^*_i\Delta\left(\eta_y^{(r)}\hat G_y^{(r)}\right)\right\|_{L^2_h(Q^h_{8r}(y)\setminus Q^h_{r/32}(y))}\nonumber\\
&\quad\qquad\times\sup_{\substack{\varphi_h=0\text{ on }(h\Z)^4\setminus V_h\\\|\varphi_h\|_{ W^{2,2}_h((h\Z)^4)}=1}}\left\|\nabla^2_h\left(\chi^{(r)}_{h,y}\varphi_h\right)\right\|_{L^2_h((h\Z)^4))}\,,\label{eq:discandcontGreenareclose1}
\end{align}
where we used that $\chi^{(r)}_{h,y}$ is supported in $B_{4r}(y)\setminus B_{r/8}(y)$ so that the support of $\Delta_h\left(\chi^{(r)}_{h,y}\varphi_h\right)$ is certainly contained in $Q^h_{8r}(y)\setminus Q^h_{r/32}(y)$.
The discrete product rule and the Poincar\'e inequality imply that
\begin{align*}
	&\left\|\nabla_h^2\left(\chi^{(r)}_{h,y}\varphi_h\right)\right\|_{L^2_h((h\Z)^4))}\\
	&\quad\le C\|\nabla_h^2\varphi_h\|_{L^2_h(Q^h_{d(y)+h}(y))}+\frac{C}{r}\|\nabla_h\varphi_h\|_{L^2_h(Q^h_{d(y)+h}(y))}+\frac{C}{r^2}\|\varphi_h\|_{L^2_h(Q^h_{d(y)+h}(y))}\\
	&\quad\le C\left(1+\frac{d(y)+h}{r}+\frac{(d(y)+h)^2}{r^2}\right)\|\varphi_h\|_{ W^{2,2}_h((h\Z)^4)}\\
	&\quad\le C_K\|\varphi_h\|_{ W^{2,2}_h((h\Z)^4)}
\end{align*}
and hence
\begin{equation}\label{eq:discandcontGreenareclose2}
\sup_{\substack{\varphi_h=0\text{ on }(h\Z)^4\setminus V_h\\\|\varphi_h\|_{ W^{2,2}_h((h\Z)^4)}=1}}\left\|\nabla_h^2\left(\chi^{(r)}_{h,y}\varphi_h\right)\right\|_{L^2_h((h\Z)^4))}\le C_K\,.
\end{equation}

Let us now also estimate the first factor in \eqref{eq:discandcontGreenareclose1}. The operator $T^{h,3,3,3,3-2e_i}$ preserves constant functions. Therefore for any $z$ with $|z-y|_\infty\ge\frac{r}{32}$
\begin{align}
&\left(T^{h,3,3,3,3-2e_i}\Delta\left(\eta^{(r)}_y\hat G_y^{(r)}\right)\right)(z)\nonumber\\
&\quad=\Delta\left(\eta^{(r)}_y\hat G^{(r)}_y\right)(z)+\left(T^{h,3,3,3,3-2e_i}\left(\Delta\left(\eta^{(r)}_y\hat G_y^{(r)}\right)(\cdot)-\Delta\left(\eta^{(r)}_y\hat G_y^{(r)}\right)(x)\right)\right)(z)\nonumber\\
&\quad=\Delta\left(\eta^{(r)}_y\hat G^{(r)}_y\right)(z)+O\left(h\sup_{z+(-\frac{3h}{2},\frac{3h}{2})}\left|\nabla^3\left(\eta^{(r)}_y\hat G_y^{(r)}\right)\right|\right)\nonumber\\
&\quad=\Delta\left(\eta^{(r)}_y\hat G^{(r)}_y\right)(z)+O\left(\frac{h}{r^3}\right)\,,\label{eq:discandcontGreenareclose3}
\end{align}
where we have used that $\left|T^{h,3,3,3,3-2e_i}f(z)\right|\le C\sup_{z+(-3h/2,3h/2)}|f|$ in the second step as well as the explicit formula for $\hat G^{(r)}(z,y)$ in the third step. From Lemma \ref{l:estgreenfull} and Taylor's theorem we know that for $\frac{r}{64}\le|z-y|_\infty\le 16r$
\begin{align*}
	D^h_\alpha \hat G_{h,y}^{(r)}(z)&=\partial^\alpha \hat G_y^{(r)}(z)+O\left(\frac{h}{r^{|\alpha|+1}}\right)\,,\\
	D^h_\alpha \hat G_{h,y}^{(r)}(z)&=O\left(\frac{1}{r^{|\alpha|}}\right)\,,\\
	D^h_\alpha\eta_{h,y}^{(r)}(z)&=\partial^\alpha\eta_y^{(r)}(z)+O\left(\frac{h}{r^{|\alpha|+1}}\right)\,,\\
	D^h_\alpha\eta_{h,y}^{(r)}(z)&=O\left(\frac{1}{r^{|\alpha|}}\right)\,.\\
\end{align*}
If we combine these estimates with the discrete product rule we obtain that for any $z$ with $\frac{r}{32}\le|z-y|_\infty\le 8r$
\begin{equation}\label{eq:discandcontGreenareclose4}
\Delta_h\left(\eta_{h,y}^{(r)}\hat G_{h,y}^{(r)}\right)(z)=\Delta\left(\eta_y^{(r)}\hat G_y^{(r)}\right)(z)+O\left(\frac{h}{r^3}\right)\,.
\end{equation}
Combining \eqref{eq:discandcontGreenareclose3} and \eqref{eq:discandcontGreenareclose4} we find that
\[\left|-\Delta_h\left(\eta_{h,y}^{(r)}\hat G_{h,y}^{(r)}\right)+ \sum_{i=1}^4T^{h,3,3,3,3-2e_i}\Delta\left(\eta_y^{(r)}\hat G_y\right)\right|\le C\frac{h}{r^3}\]
on $Q^h_{8r}(y)\setminus Q^h_{r/32}(y)$ and therefore
\[\left\|-\Delta_h\left(\eta_{h,y}^{(r)}\hat G_{h,y}^{(r)}\right)+ \sum_{i=1}^4T^{h,3,3,3,3-2e_i}\Delta\left(\eta_y^{(r)}\hat G_y\right)\right\|_{L^2_h(Q^h_{8r}(y)\setminus Q^h_{r/32}(y))}\le C\frac{h}{r}\,.\]
If we use this result and \eqref{eq:discandcontGreenareclose2} in \eqref{eq:discandcontGreenareclose1} we see that
\[\|H_h-\tilde H_h\|_{W^{2,2}_h((h\Z)^4)}\le C_K\frac{h}{r}\,.\]
In summary,
\begin{align*}
\|H_h-H\|_{W^{2,2}_h((h\Z)^4)}&\le \|H_h-\tilde H_h\|_{W^{2,2}_h((h\Z)^4)}+\|\tilde H_h-H\|_{W^{2,2}_h((h\Z)^4)}\le C_K\left(\frac{h^{\kappa_0}}{r^{\kappa_0}}+\frac{h}{r}\right)\le C_K\frac{h^{\kappa_0}}{r^{\kappa_0}}
\end{align*}
because $\frac{h}{r}\le1$.
Finally, Lemma \ref{l:poincaresobolev} allows us to conclude that for any $x\in(h\Z)^4$
\begin{align*}
|H_h(x)-H(x)|&\le C_K\sqrt{\log\left(2+\frac{d(x)}{h}\right)}\|H_h-H\|_{W^{2,2}_h((h\Z)^4)}\le C_K\frac{h^{\kappa_0}}{r^{\kappa_0}}\sqrt{\log\left(2+\frac{d(x)}{h}\right)}\,.
\end{align*}
This completes the proof.
\end{proof}

Before we turn to the proof of Theorem \ref{t:membranemodifiedassumptions} let us observe that Lemma \ref{l:poincaresobolev} already implies an upper bound on $G_h(x,y)$.
\begin{lemma}\label{l:estG_heasy}
For any $x,y$ we have that
\begin{equation}\label{e:estG_heasy_1}
|G_h(x,y)|\le C\sqrt{\log\left(2+\frac{d(x)}{h}\right)\log\left(2+\frac{d(y)}{h}\right)}\,.
\end{equation}
\end{lemma}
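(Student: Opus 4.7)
The plan is to derive the bound entirely from Lemma \ref{l:poincaresobolev} together with the variational identity satisfied by the discrete Green's function. Since $G_{h,y}:=G_h(\cdot,y)$ vanishes outside $V_h$ (indeed, by the Dirichlet and Neumann-type boundary conditions in \eqref{eq:bilaplace} it even vanishes on a layer around $V_h$), we may apply Lemma \ref{l:poincaresobolev} to obtain
\[
|G_h(x,y)|\le C\sqrt{\log\left(2+\tfrac{d(x)}{h}\right)}\,\|G_{h,y}\|_{W^{2,2}_h((h\Z)^4)}\,.
\]
So it suffices to bound $\|G_{h,y}\|_{W^{2,2}_h((h\Z)^4)}$ by $C\sqrt{G_h(y,y)}$ and then to estimate $G_h(y,y)$ by applying the resulting inequality with $x=y$.

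The first reduction uses the discrete Poincar\'e inequality (Lemma \ref{l:poincare}). Since $G_{h,y}$ vanishes on every face of a cube slightly larger than $V_h$, two applications of Lemma \ref{l:poincare}---first to $G_{h,y}$ and then, componentwise, to $\nabla_h G_{h,y}$ (which also vanishes on such a face thanks to the boundary layer of zeros)---yield
\[
\|G_{h,y}\|_{W^{2,2}_h((h\Z)^4)}\le C\,\|\nabla_h^2 G_{h,y}\|_{L^2_h((h\Z)^4)}\,.
\]
The final ingredient is the computation of the right-hand side via summation by parts. Because $G_{h,y}$ has compact support on $(h\Z)^4$, the same manipulation as in the proof of Theorem \ref{t:mss} gives $\|\nabla_h^2 G_{h,y}\|_{L^2_h((h\Z)^4)}^2=(G_{h,y},\Delta_h^2 G_{h,y})_{L^2_h((h\Z)^4)}$, and since $\Delta_h^2 G_{h,y}=\delta_{h,y}$ on $V_h$ while $G_{h,y}$ vanishes off $V_h$, this inner product collapses to $G_h(y,y)$. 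In particular $G_h(y,y)\ge0$.

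Combining these three steps yields $|G_h(x,y)|\le C\sqrt{\log(2+d(x)/h)}\sqrt{G_h(y,y)}$. Specializing to $x=y$ gives $G_h(y,y)\le C\sqrt{\log(2+d(y)/h)}\sqrt{G_h(y,y)}$, hence $\sqrt{G_h(y,y)}\le C\sqrt{\log(2+d(y)/h)}$, and inserting this back produces \eqref{e:estG_heasy_1}. There is no serious obstacle here; the argument is essentially a bootstrap that converts the self-energy identity into a pointwise bound, the logarithmic losses being exactly those supplied by Lemma \ref{l:poincaresobolev}.
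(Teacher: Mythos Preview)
Your argument is correct and is essentially the same as the paper's: both rely on the energy identity $\|\nabla_h^2 G_{h,y}\|_{L^2_h}^2=G_h(y,y)$, Lemma \ref{l:poincaresobolev}, and the bootstrap on the diagonal. The only cosmetic difference is that the paper first rewrites $G_h(x,y)=(\nabla_h^2 G_{h,x},\nabla_h^2 G_{h,y})_{L^2_h}$ and applies Cauchy--Schwarz to split symmetrically into $\sqrt{G_h(x,x)}\sqrt{G_h(y,y)}$, whereas you apply Lemma \ref{l:poincaresobolev} directly to $G_{h,y}$ at the point $x$; the remaining steps coincide.
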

\begin{proof}
The idea is the same as in the proof of \cite[Lemma 8.1]{Muller2019}. We have
\begin{align*}
	G_h(x,y)=(G_{h,x},\delta_{h,y})_{L^2_h((h\Z)^4)}=(G_{h,x},\Delta_h^2G_{h,y})_{L^2_h((h\Z)^4)}=(\nabla_h^2G_{h,x},\nabla_h^2G_{h,y})_{L^2_h((h\Z)^4)}\,.	
\end{align*}
This implies on the one hand
\begin{equation}\label{e:estG_heasy_2}
	|G_h(x,y)|\le \|\nabla_h^2G_{h,x}\|_{L^2_h((h\Z)^4)}\|\nabla_h^2G_{h,y}\|_{L^2_h((h\Z)^4)}
\end{equation}
and on the other hand (by choosing $y=x$) that
\[|G_h(x,x)|= \|\nabla_h^2G_{h,x}\|_{L^2_h((h\Z)^4)}^2\,.\]
From Lemma \ref{l:poincaresobolev} we know that
\[|G_h(x,x)|\le \sqrt{\log\left(2+\frac{d(x)}{h}\right)}\|\nabla_h^2G_{h,x}\|_{L^2_h((h\Z)^4)}\,.\]
Combining the last two estimates we obtain
\[|G_h(x,x)|\le C\log\left(2+\frac{d(x)}{h}\right)\]
which is \eqref{e:estG_heasy_1} in the special case $x=y$. For the general case we can use \eqref{e:estG_heasy_2} to see that
\begin{align*}
|G_h(x,y)|\le \|\nabla_h^2G_{h,x}\|_{L^2_h((h\Z)^4)}\|\nabla_h^2G_{h,y}\|_{L^2_h((h\Z)^4)}\le C\sqrt{\log\left(2+\frac{d(x)}{h}\right)\log\left(2+\frac{d(y)}{h}\right)}\,.
\end{align*}
\end{proof}

Now we can turn to the proof of the main technical result of this work, Theorem \ref{t:membranemodifiedassumptions}.

\begin{proof}[Proof of Theorem \ref{t:membranemodifiedassumptions}]
Recall that according to Observation \ref{o:estimatesGh} we actually have to verify \ref{B.0'}, \ref{B.1'}, \ref{B.2'} and \ref{B.3'}.

\emph{Step 1: Proof of \ref{B.1'}}\\
Let $x,y\in (h\Z)^4$. We can assume w.l.o.g. that $d(x)\le d(y)$ (else interchange $x$ and $y$). If $d(y)< 768h$ we have that $\left|\log\left(2+\frac{\max(d(x),d(y))}{h+|x-y|}\right)\right|\le C$, and by Lemma \ref{l:estG_heasy} also $|G_h(x,y)|\le C$, so that \ref{B.1'} holds trivially. Thus we can assume $d(y)\ge768h$.

Consider first the case $|x-y|\le\frac{d(y)}{4}$. Then Lemma \ref{l:discandcontGreenareclose} with $K=2$, i.e. $r=\frac{d(y)}{2}\ge192h$ implies
\begin{align*}
&\left|G_h(x,y)-\eta^{(d(y)/2)}_{h,y}(x)\hat G^{(d(y)/2)}_h(x,y)- G(x,y)+\eta_y^{(d(y)/2)}(x)\hat G^{(d(y)/2)}(x,y)\right|\\
&\quad\le C\frac{h^{\kappa_0}}{r^{\kappa_0}}\sqrt{\log\left(2+\frac{d(x)}{h}\right)}
\end{align*}
which implies that
\begin{align*}
&\left|G_h(x,y)-\hat G^{(d(y)/2)}_h(x,y)-G(x,y)+\hat G^{(d(y)/2)}(x,y)\right|\le C\frac{h^{\kappa_0}}{r^{\kappa_0}}\sqrt{\log\left(2+\frac{2r}{h}\right)}
\,.
\end{align*}
The function $s\mapsto \frac1{s^{\kappa_0}}\sqrt{\log\left(2+2s\right)}$ is bounded on $[1,\infty)$, so that we actually obtain
\begin{equation}\label{e:mainthm1}
\left|G_h(x,y)-\hat G^{(d(y)/2)}_h(x,y)-G(x,y)+\hat G^{(d(y)/2)}(x,y)\right|\le C\,.
\end{equation}
From Lemma \ref{l:estgreenfull} we know
\[\left|\hat G_h^{(d(y)/2)}(x,y)-\frac{1}{\lambda^2}\log\left(\frac{d(y)}{|x-y|+h}\right)\right|\le C\]
(where we have absorbed a term $\frac{1}{\lambda^2}\log 2$ into the constant).
Furthermore by Lemma \ref{l:estimateGpointwise}
\[\left|G(x,y)-\hat G^{(d(y)/2)}(x,y)\right|\le C\,.\]
If we use these estimates in \eqref{e:mainthm1} we obtain
\[\left|G_h(x,y)-\frac{1}{\lambda^2}\log\left(\frac{d(y)}{|x-y|+h}\right)\right|\le C\,.\]
Because $|x-y|\le\frac{d(y)}{4}$, $\frac{d(y)}{|x-y|+h}$ is bounded away from 1 by a constant, and so
\[\left|\frac{1}{\lambda^2}\log\left(\frac{d(y)}{|x-y|+h}\right)-\frac{1}{\lambda^2}\log\left(2+\frac{d(y)}{|x-y|+h}\right)\right|\le C\,.\]
Combining this with the preceding inequality we arrive at \ref{B.1'}.

If $|x-y|\ge\frac{d(y)}{4}$ we argue similarly. We use Lemma \ref{l:discandcontGreenareclose} with $r=\frac{d(y)}{4}\ge192h$ and conclude
\[\left|G_h(x,y)-G(x,y)\right|\le C\,.\]
This combined with Lemma \ref{l:estimateGpointwise} implies again \ref{B.1'}, as now $\frac{d(y)}{|x-y|+h}$ is bounded above.

\emph{Step 2: Proof of \ref{B.2'}}\\
Recall from Lemma \ref{l:estimateGpointwise} that $a(x):=\lambda^2\lim_{\substack{(x',x'')\to(x,x)\\x'\neq x''}}(G(x',x'')-\hat G(x',x''))$ is well-defined for each $x\in (0,1)^4$ and that $a\colon(0,1)^4\to\R$ is continuous.

After this remark we can proceed similarly as in the first step. We choose $f_1(x)=a(x)$, $f_2(u,v)=\lambda^2F(u-v)$ with the $F$ from Lemma \ref{l:mangad}. Furthermore we choose $\theta_0=\frac{1}{2\kappa_0}$. Given $L$ and $\theta>\theta_0$ we take $N_0'$ so large that $768L\le |\log h|^\theta$ when $h\le\frac{1}{N_0'}$. Then $d(x)\ge h|\log h|^\theta\ge 768Lh$. 
We want to apply Lemma \ref{l:discandcontGreenareclose} with $K=8$ and $r=\frac{d(x)}{4}$ at the point $(x+hu,x+hv)$. We have that $r=\frac{d(x)}{4}\le \frac{d(x+hv)+Lh}{4}\le \frac{d(x+hv)}{2}$ and similarly $r\ge \frac{d(x+hv)}{8}$, and also $r=\frac{d(x)}{4}\ge 192Lh\ge192h$ so that all assumptions of the lemma are satisfied. We obtain
\begin{align}
	&\left|G_h(x+hu,x+hv)-\hat G^{(d(x)/4)}_h(x+hu,x+hv)-G(x+hu,x+hv)+\hat G^{(d(x)/4)}(x+hu,x+hv)\right|\nonumber\\
	&\quad\le C\frac{h^{\kappa_0}}{r^{\kappa_0}}\sqrt{\log\left(2+\frac{d(x+hu)}{h}\right)}\le C\frac{h^{\kappa_0}\sqrt{|\log h|}}{r^{\kappa_0}}\le C\frac{h^{\kappa_0}\sqrt{|\log h|}}{(h|\log h|^\theta)^{\kappa_0}}\le C|\log h|^{\frac12-\theta\kappa_0}\,.\label{e:mainthm2}
\end{align}
Here we could omit the cut-off functions $\eta_h^{(d(x)/4)}$ and $\eta^{(d(x)/4)}$ because $|x+hu-(x+hv)|\le4Lh\le\frac{d(x)}{8}$.
Since $\theta\kappa_0>\theta_0\kappa_0=\frac12$, for $N_0'$ large enough the term on the right hand side will be less than $\frac{\varepsilon}{2\lambda^2}$ whenever $h\le\frac{1}{N_0'}$.

By \eqref{e:boundGholder} in Lemma \ref{l:estimateGpointwise} we have for $u,v\in [0,L]^4$
\begin{align*}
	&\left|G(x+hu,x+hv)-\hat G^{(d(x)/4)}(x+hu,x+hv)-\frac{a(x)}{\lambda^2}-\frac{1}{\lambda^2}\log\frac{d(x)}{4}\right|\\
	&\quad\le C\left(\frac{|hu|^{\kappa_0}+|hv|^{\kappa_0}}{d(x)^{\kappa_0}}\right)\le C_L\frac{h^{\kappa_0}}{d(x)^{\kappa_0}}\le C_L|\log h|^{-\theta\kappa_0}\,.
\end{align*}
Thus we can choose $N_0'$ large enough such that for $h\le\frac{1}{N_0'}$ we have
\begin{align*}
&\sup_{u,v\in[0,L]^4\cap\Z^4}\left|G(x+hu,x+hv)-\hat G^{(d(x)/4)}(x+hu,x+hv)-\frac{a(x)}{\lambda^2}-\frac{1}{\lambda^2}\log\frac{d(x)}{4}\right|\le\frac{\varepsilon}{2\lambda^2}
\end{align*}
uniformly in $x$. Our definition of $G^{(d(x)/4)}_h$ implies that
\begin{align*}
\hat G_h^{(d(x)/4)}(x+hu,x+hv)&=F\left(\frac{x+hu}{h}-\frac{x+hv}{h}\right)-\frac{1}{\lambda^2}\log h+\frac{1}{\lambda^2}\log \frac{d(x)}{4}\\
&=F(u-v)-\frac{1}{\lambda^2}\log h+\frac{1}{\lambda^2}\log \frac{d(x)}{4}\,.
\end{align*}
Using these results in \eqref{e:mainthm2} we arrive at
\[\left|G_h(x+hu,x+hv)-F(u-v)+\frac{1}{\lambda^2}\log h-\frac{a(x)}{\lambda^2}\right|\le \frac{\varepsilon}{\lambda^2}\]
for $h\le\frac{1}{N_0'}$, which implies \ref{B.2'}.

\emph{Step 3: Proof of \ref{B.3'}}\\
This is very similar to Step 2. We set $f_3(x,y)=\lambda^2G(x,y)$, which is continuous away from the diagonal according to Lemma \ref{l:estimateGpointwise}. 

We use Lemma \ref{l:discandcontGreenareclose} with $K=L$ and $r=\frac{d(y)}{L}\le\frac1L\le|x-y|$. For $N_1'$ large enough we have $r\ge192h$, and the lemma implies
\[\left|G_h(x,y)-G(x,y)\right|\le C_L \frac{h^{\kappa_0}\sqrt{|\log h|}}{r^{\kappa_0}}\le C_L|\log h|^{\frac12-\theta\kappa_0}\]
and it suffices to take $N_1'$ so large that the right hand side is less than $\frac{\varepsilon}{\lambda^2}$ for any $h\le\frac{1}{N_1'}$.

\emph{Step 4: Proof of \ref{B.0'}}\\
Here we actually need to prove three estimates, namely
\begin{align}
\lambda^2G_h(x,x) &\le |\log h|+ C\label{e:mainthm3}\\
\lambda^2G_h(x,x) &\le C\log\left(2+\frac{d(x)}{h}\right)\label{e:mainthm4}\\
\lambda^2(G_h(x,x)-G_h(x,y))&\le \log\left(1+\frac{|x-y|}{h}\right)+C\,.\label{e:mainthm5}
\end{align}
Now \eqref{e:mainthm3} follows immediately from \ref{B.1'}, and \eqref{e:mainthm4} is a special case of Lemma \ref{l:estG_heasy}. Finally, \eqref{e:mainthm5} can be obtained from \ref{B.1'} as follows. We know that
\begin{align*}
\lambda^2(G_h(x,x)-G_h(x,y))&\le\log\left(2+\frac{d(x)}{h}\right) -\log\left(2+\frac{\max(d(x),d(y))}{h+|x-y|}\right)+C\\
&=\log\left(\frac{(d(x)+2h)(|x-y|+h)}{h(h+|x-y|+2\max(d(x),d(y)))}\right)+C
\end{align*}
so one only has to observe that
\[\frac{d(x)+2h}{h+|x-y|+2\max(d(x),d(y))}\le C\,.\]
\end{proof}

Finally we give the proof of Theorem \ref{t:convmaxmembrane}. 
\begin{proof}[{Proof of Theorem \ref{t:convmaxmembrane}}]
Because of Theorem \ref{t:logcorrconvinlaw} and Observation \ref{o:estimatesGh} all we have to check is that each of the statements \ref{A.0'}, \ref{A.1'}, \ref{A.2'}, \ref{A.3'} implies its counterpart without the prime.

We begin with \ref{A.0'}$\implies$\ref{A.0}. We know that \[\Var \varphi_{N, v} \le \min\left(\log N+ \alpha_0',\alpha_0'\log (2+d_N(v))\right)\]and this implies in particular that
\[\Var \varphi_{N, v} \le \log N+ \alpha_0'\,.\]
Furthermore, if we know
\[\Var \varphi_{N, v}-\Cov(\varphi_{N, v},\varphi_{N, u})  \le \log_+ |u-v| + 2\alpha_0'\]
then by symmetry this also holds with $u,v$ interchanged, so that we actually have
\begin{align*}
\max\left(\Var \varphi_{N, v}-\Cov(\varphi_{N, v},\varphi_{N, u}),\Var \varphi_{N, u}-\Cov(\varphi_{N, v},\varphi_{N, u})\right)\le \log_+ |u-v| + 2\alpha_0'
\end{align*}
and a short calculation shows that this is the same as
\[\E (\varphi_{N, v} - \varphi_{N, u})^2  \le 2 \log_+ |u-v| - | \Var \varphi_{N, v} - \Var \varphi_{N, u}| + C\,.\]

For \ref{A.1'}$\implies$\ref{A.1} one has to verify that $\min(d(u),d(v))\ge\delta N$ implies 
\[\left|\log\left(2+\frac{\max(d_N(u),d_N(v))}{1+|u-v|}\right)-\log\left(\frac{N}{1+|u-v|}\right)\right|\le C_\delta\,,\]
which is straightforward.

For \ref{A.2'}$\implies$\ref{A.2} we fix some $\theta>\theta_0$. Given $L,\varepsilon,\delta$, we choose $N_0\ge N_0'(L,\varepsilon,\theta)$ large enough such that $|\log N|^\theta\le\delta N$ for all $N\ge N_0$ and conclude \ref{A.2}. Analogously one sees that \ref{A.3'}$\implies$\ref{A.3}.

\end{proof}

\paragraph{Acknowledgements}
The author would like to thank Stefan M\"uller for some valuable suggestions that helped improve the argument. He would also like to thank Ofer Zeitouni for inspiring discussions and his encouragement.
 
Most of this work was conducted while the author was visiting the Courant Institute of NYU while supported by the Global Math Network, and he would like to thank the Institute for its hospitality.

The author was supported by the Hausdorff Center for Mathematics (GZ
2047/1, Projekt-ID 3906$ $85813) through the Bonn International Graduate School of Mathematics, and by the German National Academic Foundation.

\bibliographystyle{alpha_edited2}
\bibliography{M4DM_AOP_final}

\end{document}